\theoremstyle{plain}  % Bold name, italics font
\newtheorem{theorem}{Theorem}[section]
\newtheorem{prop}[theorem]{Proposition}
\newtheorem{cor}[theorem]{Corollary}
\newtheorem{defn}[theorem]{Definition}
\theoremstyle{definition}
\theoremstyle{remark} % italics name, roman font
\newtheorem{remark}{Remark}[section]
\def\0{{\bf 0}}
\def\1{{\bf 1}}
\def \bmat{\left[\begin{matrix}}
	\def \emat{\end{matrix}\right]}
\def \xy1vec{\left[\begin{matrix}x\\y\\1\end{matrix}\right]}
\def \QED{\begin{flushright}\Halmos\end{flushright}\end{proof}}
\def \defeq{\mathrel{\mathop{:}}=}
\long\def\old#1{}
\definecolor{DarkerGreen}{RGB}{0,170,0}
\long\def\jeff#1{{\color{black}#1}}
\definecolor{orange}{rgb}{1,0.5,0}
\title{\LARGE \bf On the Complexity of Testing Attainment of the Optimal Value in Nonlinear Optimization}
\author{Amir Ali Ahmadi and Jeffrey Zhang \thanks{The authors are with  the department of Operations Research and Financial Engineering at Princeton University. Email: \{\texttt{a\_a\_a}, \texttt{jeffz}\}\texttt{@princeton.edu}. 
 This work was partially supported by the DARPA Young Faculty Award, the CAREER Award of the NSF, the Innovation Award of the School of Engineering and Applied Sciences at Princeton University, the MURI award of the AFOSR, the Google Faculty Award, and the Sloan Fellowship.}}
\begin{document}
\date{}
\maketitle

%\thispagestyle{empty}
%\pagestyle{empty}

%%%%%%%%%%%%%%%%%%%%%%%%%%%%%%%%%%%%%%%%%%%%%%%%%%%%%%%%%%%%%%%%%%%%%%%%%%%%%%%%
\begin{abstract}
	\noindent
	We prove that unless P=NP, there exists no polynomial time (or even pseudo-polynomial time) algorithm that can test whether the optimal value of a nonlinear optimization problem where the objective and constraints are given by low-degree polynomials is attained. If the degrees of these polynomials are fixed, our results along with previously-known ``Frank-Wolfe type'' theorems imply that exactly one of two cases can occur: either the optimal value is attained on every instance, or it is strongly NP-hard to distinguish attainment from non-attainment. We also show that testing for some well-known sufficient conditions for attainment of the optimal value, such as coercivity of the objective function and closedness and boundedness of the feasible set, is strongly NP-hard. As a byproduct, our proofs imply that testing the Archimedean property of a quadratic module is strongly NP-hard, a property that is of independent interest to the convergence of the Lasserre hierarchy. Finally, we give semidefinite programming (SDP)-based sufficient conditions for attainment of the optimal value, in particular a new characterization of coercive polynomials that lends itself to an SDP hierarchy.

\end{abstract}

\paragraph{Keywords:} {\small Existence of solutions in mathematical programs, Frank-Wolfe type theorems, coercive polynomials, computational complexity, semidefinite programming, Archimedean quadratic modules.}

%%%%%%%%%%%%%%%%%%%%%%%%%%%%%%%%%%%%%%%%%%%%%%%%%%%%%%%%%%%%%%%%%%%%%%%%%%%%%%%%
\section{Introduction}\label{Sec: intro}

Consider an optimization problem of the form

\begin{equation}\label{Defn: OPT}
\begin{aligned}
& \underset{x}{\inf}
& & f(x) \\
& \text{subject to}
&& x \in \Omega,\\
\end{aligned}
\end{equation}
where $f: \mathbb{R}^n \to \mathbb{R}$ and $\Omega \subseteq \mathbb{R}^n$. In this paper, we are interested in the complexity of checking whether we can replace the ``$\inf$'' with a ``$\min$''. More precisely, suppose the optimal value $f^*$ of this problem is finite, i.e., the problem is feasible and bounded below. We would like to test if the optimal value is \emph{attained}, i.e., whether there exists a point $x^* \in \Omega$ such that $f(x^*) \le f(x)\ \forall x \in \Omega$, or equivalently, such that $f^*=f(x^*).$ Such a point $x^*$ will be termed an \emph{optimal solution}. 

%We remark that testing feasibility or unboundedness of problem (\ref{Defn: OPT}), which has already been studied much more extensively from a computational complexity perspective, is not the focus of this paper. All problems that we consider are by construction feasible and bounded below and our question of interest is attainment of the optimal value as defined in the previous paragraph.

%Existence of optimal solutions is a fundamental question in the theory of optimization and has ramifications in algorithm design for problem (\ref{Defn: OPT}). For example, a primal-dual algorithm which keeps primal and dual feasibility can only hope to reduce the duality gap to zero in finite time if the optimal values of both the primal and the dual are attained. As a more elementary application, consider the simple setting where the problem is unconstrained (i.e., $\Omega=\mathbb{R}^n$) and the objective function $f$ is lower bounded and differentiable. If existence of an optimal solution is assured, then in order to find the optimal value, it is enough to restrict attention to points that satisfy the first order optimality condition $\nabla f(x)=0.$

%, duality theory, and optimality conditions

%We remark for the sake of clarity that our focus
% and the question of interest is testing the attainment of the optimal value.

Existence of optimal solutions is a fundamental question in optimization and its study has a long history, dating back to the nineteenth century with the extreme value theorem of Bolzano and Weierstrass. While the problem has been researched in depth from an analytical perspective, to our knowledge, it has not been studied from an algorithmic viewpoint. The most basic question in this direction is to ask whether one can efficiently check for existence of an optimal solution with an algorithm that scales reasonably with the description size of the function $f$ and the set $\Omega$ in (\ref{Defn: OPT}).\footnote{To remove possible confusion, we emphasize that our focus in this paper is not on the complexity of testing feasibility or unboundedness of problem (\ref{Defn: OPT}), which have already been studied extensively. On the contrary, all optimization problems that we consider are by construction feasible and bounded below.}

A class of optimization problems that allows for a rigorous study of this algorithmic question is the class of \emph{polynomial optimization problems (POPs)}. These are problems where one minimizes a polynomial function over a closed basic semialgebraic set, i.e., problems of the type

\begin{equation}\label{Defn: pop}
\begin{aligned}
& \underset{x}{\inf}
& & f(x) \\
& \text{subject to}
&& g_i(x) \ge 0, \forall i \in \{1,\ldots,m\},\\
\end{aligned}
\end{equation}
where $f,g_i$ are polynomial functions. The question of testing attainment of the optimal value for POPs has appeared in the literature explicitly. For example, Nie, Demmel, and Sturmfels describe an algorithm for globally solving an unconstrained POP which requires as an assumption that the optimal value be attained \cite{nie2006minimizing}. This leads them to make the following remark in their conclusion section:
%give an algorithm in \cite{nie2006minimizing} for minimizing a polynomial $f$ over its gradient variety. Their algorithm is based on the assumption that the optimal value of $f$ is attained, which leads them to make the following remark in their conclusion section:
\begin{quote}
	``This assumption is non-trivial, and we do not address the (important and difficult) question of how to verify that a given polynomial $f(x)$ has this property.''
\end{quote}
%For example, the procedure of finding the unconstrained minimum of a differentiable function $f$ by searching among the zeros of its gradient would fail if the optimal value is not attained. More generally, if one wants to use optimality conditions to recover optimal solutions, then one needs to ensure first that optimal solutions exist. 
%For example, finding the unconstrained minimum of a differentiable function $f$ by searching among the zeros of its gradient would fail.

Prior literature on existence of optimal solutions to POPs has focused on identifying cases where existence is always guaranteed. The best-known result here is the case of linear programming (i.e., when the degrees of $f$ and $g_i$ are one). In this case, the optimal value of the problem is always attained. This result was extended by Frank and Wolfe to the case where $f$ is quadratic and the polynomials $g_i$ are linear~\cite{frank1956algorithm}. Consequently, results concerning attainment of the optimal value are sometimes referred to as ``Frank-Wolfe type'' theorems in the literature \cite{belousov2002frank,luo1999extensions}. Andronov et al. showed that the same statement holds again when $f$ is cubic (and the polynomials $g_i$ are linear)~\cite{andronov1982solvability}. 

Our results in this paper show that in all other cases, it is strongly NP-hard to determine whether a polynomial optimization problem attains its optimal value. This implies that unless P=NP, there is no polynomial-time (or even pseudo-polynomial time)  algorithm for checking this property. Nevertheless, it follows from the Tarski-Seidenberg quantifier elimination theory \cite{seidenberg1954new,tarski1951decision} that this problem is decidable, i.e., can be solved in finite time. \jeff{There are also probabilistic algorithms that test for attainment of the optimal value of a POP \cite{greuet2011deciding,greuet2014probabilistic}, but their complexities are exponential in the number of variables.}

In this paper, we also study the complexity of testing several well-known sufficient conditions for attainment of the optimal value (see Section \ref{SSec: Contributions} below). One sufficient condition that we do not consider but that is worth noting is for the polynomials $f,-g_1,\ldots,-g_m$ to all be convex (see \cite{belousov2002frank} for a proof, \cite{luo1999extensions} for the special case where $f$ and $g_i$ are quadratics, and \cite{bertsekas2007set} for other extensions). The reason we exclude this sufficient condition from our study is that the complexity of checking convexity of polynomials has already been analyzed in \cite{ahmadi2013np}.

\subsection{Organization and Contributions of the Paper}\label{SSec: Contributions}

As mentioned before, this paper concerns itself with the complexity of testing attainment of the optimal value of a polynomial optimization problem. More specifically, we show in Section \ref{Sec: AOS} that it is strongly NP-hard to test attainment when the objective function has degree 4, even in absence of any constraints (Theorem \ref{Thm: AOS NP-hard o4c0}), and when the constraints are of degree 2, even when the objective is linear (Theorem \ref{Thm: AOS NP-hard o1c2}). 
%
% More precisely, this claim holds if the constraints are defined by quadratic polynomials (and the objective has degree as low as one) or if the objective function is a quartic polynomial (even in absence of any constraints). For polynomial optimization problems with linear constraints and objective function of degrees 1,2, or 3, previous results imply that feasibility and lower boundedness always guarantee existence of an optimal solution.

In Section \ref{Sec: Sufficient Conditions}, we show that several well-known sufficient conditions for attainment of the optimal value in a POP are also strongly NP-hard to test. These include coercivity of the objective function (Theorem \ref{Thm: Coercive NP-hard}), closedness of a bounded feasible set (Theorem \ref{Thm: Closedness} and Remark \ref{rem:boundedness.compactness}), boundedness of a closed feasible set (Corollary \ref{Thm: Boundedness}), a robust analogue of compactness known as stable compactness (Corollary \ref{Thm: Stable Compact NP-hard}), and an algebraic certificate of compactness known as the Archimedean property (Theorem \ref{Thm: Archimedean NP-hard}). The latter property is of independent interest to the convergence of the Lasserre hierarchy, as discussed in Section \ref{SSSec: Archimedean}.

In Section \ref{Sec: Algorithms}, we give semidefinite programming (SDP) based hierarchies for testing compactness of the feasible set and coercivity of the objective function of a POP (Propositions \ref{Prop: Compactness Stengle} and \ref{Prop: Coercive sublevel sets bounded}). The hierarchy for compactness comes from a straightforward application of Stengle's Positivstellensatz (cf. Theorem \ref{Thm: Stengle}), but the one for coercivity requires us to develop a new characterization of coercive polynomials (Theorem \ref{Thm: Polynomial Radius}). We end the paper in Section \ref{Sec:conclusion} with a summary and some brief concluding remarks.

\section{NP-hardness of Testing Attainment of the Optimal Value}\label{Sec: AOS}

In this section, we show that testing attainment of the optimal value of a polynomial optimization problem is NP-hard. Throughout this paper, when we study complexity questions around problem (\ref{Defn: pop}), we fix the degrees of all polynomials involved and think of the number of variables and the coefficients of these polynomials as input. Since we are working in the Turing model of computation, all the coefficients are rational numbers and the input size can be taken to be the total number of bits needed to represent the numerators and denominators of these coefficients.

Our proofs of hardness are based on reductions from ONE-IN-THREE 3SAT which is known to be NP-hard \cite{schaefer1978complexity}. Recall that in ONE-IN-THREE 3SAT, we are given a 3SAT instance (i.e., a collection of clauses, where each clause consists of exactly three literals, and each literal is either a variable or its negation) and we are asked to decide whether there exists a $\{0, 1\}$ assignment to the variables that makes the expression true with the additional property that each clause has \emph{exactly} one true literal.

%The ONE-IN-THREE 3SAT problem is the following: Given a set of boolean variables and a set of conjunctive clauses each containing 3 of them and their negations, whether there is a truth assignment such that exactly one literal in each clause is satisfied. The ONE-IN-THREE 3SAT differs from the typical 3SAT problem in that 3SAT asks whether there is an assignment of variables which makes at \emph{at least} one literal in each clause satisfied. 
%A further implication of our reductions is that all our results show hardness in the strong sense, that is, the problems are hard even when the coefficients of the polynomials are bounded by $O(\log n)$ in bit size, where $n$ is the dimension of the problem. A more precise definition can be found in \cite{garey2002computers}.

\begin{theorem}\label{Thm: AOS NP-hard o4c0} Testing whether a degree-4 polynomial attains its unconstrained infimum is strongly\footnote{We recall that a strong NP-hardness result implies that the problem remains NP-hard even if the size (bit length) of the coefficients of the polynomial is $O(\log(n))$, where $n$ is the number of variables. For a strongly NP-hard problem, even a pseudo-polynomial time algorithm cannot exist unless P=NP. See \cite{garey2002computers} for precise definitions and more details.} NP-hard.
%A further implication of our reductions is that all our results show hardness in the strong sense, that is, the problems are hard even when the coefficients of the polynomials are bounded by $O(\log n)$ in bit size, where $n$ is the dimension of the problem. A more precise definition can be found in \cite{garey2002computers}.} NP-hard.
\end{theorem}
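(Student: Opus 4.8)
The plan is to reduce from \textsc{One-in-Three 3Sat}. Given an instance with clauses over variables $x_1,\dots,x_n$, I will build a degree-4 polynomial $p$ whose unconstrained infimum is $0$, and whose infimum is attained if and only if the instance is \emph{not} satisfiable (or vice versa). The first ingredient is a sum-of-squares ``gadget'' that encodes the combinatorial constraints: the polynomial $q(x) = \sum_i x_i^2(1-x_i)^2 + \sum_{\text{clauses}}(x_j + x_k + x_\ell - 1)^2$ is nonnegative, has degree 4, and vanishes exactly on the Boolean points corresponding to one-in-three satisfying assignments. So $q$ has a zero iff the instance is a \textsc{Yes} instance, and in that case its infimum $0$ is trivially attained at that finite point.

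To turn this into a statement about \emph{attainment} rather than about whether the infimum is zero, the trick is to make the relevant minimizer ``run off to infinity.'' I would introduce one or two extra variables, say $y$ (and possibly $t$), and consider something like $p(x,y) = q(x) \cdot h(y) + (\text{a term forcing } y\to\infty)$, arranged so that $\inf p = 0$ always, but the value $0$ can only be approached along a sequence with $y\to\infty$ when $q$ has no zero, whereas if $q$ has a zero the value $0$ is hit at a finite point. A clean way to do this is to use a construction in the spirit of $p(x,y) = \big(q(x) y - 1\big)^2 + q(x)$ or $p(x,y)= q(x) + \big(y^2 q(x) - 1\big)^2$-type expressions: if $q(x^*)=0$ one cannot make the second square zero, forcing the infimum-approaching sequence to send $q(x)\to 0$ with $y^2 q(x)\to 1$, i.e. $y\to\infty$, so the infimum (still $0$... here I need to recompute) is not attained; if $q$ has no zero on $\mathbb{R}^n$ then $\inf q>0$ is attained by coercivity-type arguments and the behaviour flips. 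The exact algebraic form needs to be chosen so that (i) $p$ has degree exactly $4$, (ii) $\inf p$ is the same finite number in both cases, and (iii) attainment toggles with satisfiability; getting all three simultaneously with only degree 4 available is the delicate bookkeeping step, and I expect to need the fact that $q$, being a positive-definite-on-its-zero-set sum of squares with a specific structure, has its positive infimum attained — or to avoid this by a homogenization/perturbation trick.

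The strong NP-hardness claim then follows from two observations. First, the reduction is polynomial in the size of the \textsc{One-in-Three 3Sat} instance: the number of variables of $p$ is $n+O(1)$ and the number of monomials is polynomial in $n$ and $m$. Second, and crucially for \emph{strong} NP-hardness, all coefficients of $p$ are small integers (bounded by an absolute constant, or at worst $O(\mathrm{poly})$ but with bit-length $O(\log n)$), since the gadget $q$ and the extra glue terms have coefficients like $\pm 1, \pm 2$ independent of the instance. Hence the hardness survives the restriction to $O(\log n)$-bit coefficients, which by the standard definitions (\cite{garey2002computers}) rules out even a pseudo-polynomial algorithm unless P$=$NP. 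I also need to double-check that the constructed problem is genuinely an instance of the ``attainment'' problem as posed, i.e. that $p$ is bounded below (it is, being a sum of squares plus a bounded-below piece) and that its infimum is finite — both are immediate from the construction.

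The main obstacle I anticipate is the second paragraph: designing the ``escape to infinity'' gadget so that the infimum value itself does \emph{not} change between the two cases (otherwise one would merely be testing the sign of the optimal value, not its attainment), while keeping the total degree at $4$. A natural fallback, if degree $4$ with zero constraints proves too tight, is to allow the extra bookkeeping to be absorbed into the structure of $q$ itself — for instance by noting that $q$ already has the form needed and only a single auxiliary variable entering quadratically-times-quadratically is required — but verifying the degree count and the exact matching of infimum values is where the real work lies.
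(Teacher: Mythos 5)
Your high-level plan is sound and matches the paper's in spirit: encode ONE-IN-THREE 3SAT by a nonnegative quartic $q$ that vanishes exactly on satisfying assignments, then glue on an ``escape to infinity'' gadget built from $(yz-1)^2+y^2$-type terms so that attainment of the infimum toggles with satisfiability. Your worry (ii), that the infimum value must be the same in both cases, is unnecessary: the decision problem is just ``is the infimum attained,'' and a reduction in which one branch has infimum $0$ (not attained) and the other has infimum $\epsilon>0$ (attained) is perfectly valid, even if the infimum value happens to also encode the answer.

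The genuine gap is the degree. Your proposed gadgets, $\bigl(q(x)y-1\bigr)^2+q(x)$ and $q(x)+\bigl(y^2q(x)-1\bigr)^2$, have degree at least $10$ because $q$ itself already has degree $4$, so $q(x)y$ has degree $5$ and its square degree $10$. You flag this as ``the delicate bookkeeping step'' but do not resolve it, and multiplying $q$ into a square term cannot get you to degree $4$. The paper handles this with a fundamentally different glue: it uses a \emph{switch} variable $\lambda$ and sets $p_\phi=\lambda^2 s_\phi(x)+(1-\lambda)^2\bigl(y^2+(yz-1)^2\bigr)$, which is degree $6$, and crucially the escape gadget $(1-\lambda)^2\bigl(y^2+(yz-1)^2\bigr)$ is completely decoupled from the SAT gadget rather than multiplied against it. The drop from degree $6$ to degree $4$ is then achieved by a standard substitution trick: introduce new variables $\chi_i$ and $w$ meant to equal $\lambda x_i$ and $yz$, replace $\lambda x_i$ by $\chi_i$ inside $\lambda^2 s_\phi$ and $yz$ by $w$, and add the degree-$4$ penalties $\sum_i(\chi_i-\lambda x_i)^2$ and $(w-yz)^2$. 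One then verifies that zeros of the original degree-$6$ polynomial correspond exactly to zeros of the new quartic. Without something like this substitution idea, your construction does not reach degree $4$, and that is the missing piece of the argument. A secondary, minor note: the coefficients of the SAT gadget are not absolute constants as you claim but can grow like $O(n+k)$; this is still $O(\log n)$ bits, so the conclusion of strong NP-hardness survives, but the justification should be the bit-length bound rather than a constant-coefficient claim.
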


\begin{proof}
	
	Consider a ONE-IN-THREE 3SAT instance $\phi$ with $n$ variables $x_1,\ldots,x_n,$ and $k$ clauses. Let $s_\phi(x):\mathbb{R}^n\rightarrow \mathbb{R}$ be defined as
	\begin{equation}\label{eq: 1 in 3 sos}s_\phi(x) = \sum_{i=1}^k (\phi_{i1} + \phi_{i2} + \phi_{i3} + 1)^2 + \sum_{i=1}^n (1-x_i^2)^2,\end{equation}
	where $\phi_{it} = x_j$ if the $t$-th literal in the $i$-th clause is $x_j$, and $\phi_{it} = -x_j$ if it is $\neg x_j$ (i.e., the negation of $x_j$). Now, let 
	\jeff{\begin{equation}\label{eq: 1 in 3 sos plus unattained}
	p_\phi(x,y,z,\lambda) \mathrel{\mathop{:}}= \lambda^2s_\phi(x) + (1-\lambda)^2(y^2 + (yz-1)^2),\end{equation}}where $y,z,\lambda \in \mathbb{R}.$
	We show that $p_{\phi}$ achieves its infimum if and only if $\phi$ is satisfiable. Note that the reduction is polynomial in length and the coefficients of $p_\phi$ are \jeff{at most a constant factor of $n+k$ in absolute value}.
	
	If $\phi$ has a satisfying assignment, then for any $y$ and $z$, letting $\lambda = 1$, $x_i=1$ if the variable is true in that assignment and $x_i = -1$ if it is false, results in a zero of $p_\phi$. As $p_{\phi}$ is a sum of squares and hence nonnegative, we have shown that it achieves its infimum.
	
	Now suppose that $\phi$ is not satisfiable. We will show that $p_{\phi}$ is positive everywhere but gets arbitrarily close to zero. To see the latter claim, simply set $\lambda=0, z=\frac{1}{y}$, and let $y \rightarrow 0.$ To see the former claim, suppose for the sake of contradiction that $p_{\phi}$ has a zero. \jeff{Since} $y^2+(yz-1)^2$ is always positive, we must have $\lambda=1$ \jeff{in order for the second term to be zero.} \jeff{Then, in order for the whole expression to be zero, we must also have that} $s_{\phi}(x)$ must vanish at some $x.$ But any zero of $s_\phi$ must have each $x\in \{-1,1\}^n$, due to the second term of $s_\phi$. However, because the instance $\phi$ is not satisfiable, for any such $x$, there exists $i \in \{1,\ldots,k\}$ such that $\phi_{i1} + \phi_{i2} + \phi_{i3}+1 \neq 0$, as there must be a clause where not exactly one literal is set to one. This means that $s_{\phi}$ is positive everywhere, which is a contradiction. 
	
	 %at least one of the summing polynomials of the first term of $s_\phi$ has to be nonzero, since $\phi_{i1} + \phi_{i2} + \phi_{i3}+1 = 0$ only if the truth values correspond to one true and two false. Hence $s_\phi$ is positive everywhere if and only if the $\phi$ was not satisfiable. Therefore $p_\phi$ attains its infimum if and only if $\phi$ is satisfiable.

	%Note that any zero of $p_\phi$ must have $\lambda = 0$ or $1$ due to the third term of $p_\phi$. 

	 %and in particular we must have $\lambda=1$. This is because the polynomial $y^2 + (yz-1)^2$ does not attain its infimum. Note that this function is a sum of squares, and thus nonnegative. Furthermore, it has an infimum of zero, which can be seen by letting $y \to 0$ while letting $z=\frac{1}{y}$ (note that this implies the infimum of $p_\phi$ is always zero as well). However, this infimum cannot be attained, as any zero must have $y=0$ and $yz=1$, which cannot happen concurrently.
	
	%Now observe that any zero of $p_\phi$ must have each $x\in \{-1,1\}^n$, due to the second term of $s_\phi$. However, because the instance is not satisfiable, for any such $x$ at least one of the summing polynomials of the first term of $s_\phi$ has to be nonzero, since $\phi_{i1} + \phi_{i2} + \phi_{i3}+1 = 0$ only if the truth values correspond to one true and two false. Hence $s_\phi$ is positive everywhere if and only if the $\phi$ was not satisfiable. Therefore $p_\phi$ attains its infimum if and only if $\phi$ is satisfiable.
	
	We have thus shown that testing attainment of the optimal value is NP-hard for unconstrained POPs where the objective is a polynomial of degree 6. In the interest of minimality, we now extend the proof to apply to an objective function of degree 4. To do this, \jeff{we first introduce $n+1$ new variables $\chi_1, \ldots, \chi_n$ and $w$}. \jeff{We} replace every occurrence of the product $\lambda x_i$ \jeff{in $\lambda^2 s_\phi$} with the variable $\chi_i$. For example, the term $\lambda^2 x_1x_2$ would become $\chi_1\chi_2$. Let $\hat{s}_\phi(x,\chi,\lambda)$ denote this transformation on $\lambda^2s_\phi(x)$. Note that \jeff{$\hat{s}_\phi(x, \chi, \lambda)$} is now a quartic polynomial. Now consider the quartic polynomial (whose coefficients are again \jeff{at most a constant factor of $n+k$ in absolute value})
	\begin{equation}\label{eq: 1 in 3 sos new}
	\hat{p}_\phi(x,y,z,\lambda,\chi,w) = \hat{s}_\phi(x, \chi,\lambda) + (1-\lambda)^2(y^2 + (w-1)^2) + (w-yz)^2 + \sum_{i=1}^n (\chi_i - \lambda_ix_i)^2.
	\end{equation}
	Observe that $\hat{p}_\phi$ is a sum of squares as $\hat{s}_\phi$ can be verified to be a sum of squares by bringing $\lambda$ inside every squared term of $s_\phi$. Hence, $\hat{p}_\phi$ is nonnegative. Furthermore, its infimum is still zero, as the choice of variables $\lambda = 0, w = 1, \chi = 0$, $x$ arbitrary, $z = \frac{1}{y}$, and letting $y \to \infty$ will result in arbitrarily small values of $\hat{p}_\phi$. Now it remains to show that this polynomial will have a zero if and only if $p_{\phi}$ in (\ref{eq: 1 in 3 sos plus unattained}) has a zero. Observe that if $(x,y,z,\lambda)$ is a zero of $p_{\phi}$, then $(x,y,z, \lambda, \lambda x,yz)$ is a zero of $\hat{p}_{\phi}$. Conversely, if $(x,y,z,\lambda,\chi,w)$ is a zero of $\hat{p}_\phi$, then $(x,y,z,\lambda)$ is a zero of $p_{\phi}$. \end{proof}

\begin{remark} Because we use the ideas behind this reduction repeatedly in the remainder of this paper, we refer to the quartic polynomial defined in ($\ref{eq: 1 in 3 sos}$) as $s_\phi$ throughout. The same convention for $\phi_{it}$ relating the literals of $\phi$ to the variables $x$ will be assumed as well.

%The function $s_{\phi h}(x_0, x)$ will refer to the homogenized version of $s_{\phi}$,
%	\begin{equation}\label{eq: 1 in 3 sos homo}
%	s_{\phi h}(x_0, x) \defeq x_0^4 s_\phi \left(\frac{x}{x_0}\right) = \sum_{i=1}^k (\phi_{i1} + \phi_{i2} + \phi_{i3} + x_0)^4 + \sum_{i=1}^n (x_0^2-x_i^2)^2,
%	\end{equation}
%	which has the property of being positive definite\footnote{We say that a homogeneous polynomial $p$ is \emph{positive definite} if $p(z) > 0$ for all nonzero $z$.} if and only if the instance $\phi$ is not satisfiable (see Theorem \ref{Thm: Coercive NP-hard} for the argument).
\end{remark}

We next show that testing attainment of the optimal value of a POP is NP-hard when the objective function is linear and the constraints are quadratic. Together with the previously-known Frank-Wolfe type theorems which we reviewed in the introduction, Theorems \ref{Thm: AOS NP-hard o4c0} and \ref{Thm: AOS NP-hard o1c2} characterize the complexity of testing attainment of the optimal value in polynomial optimization problems of any given degree. Indeed, our reductions can trivially be extended to the case where the constraints or the objective have higher degrees. \jeff{For example to increase the degree of the constraints to some positive integer $d$, one can introduce a new variable $\gamma$ along with the trivial constraint $\gamma^d = 0$. To increase the degree of the objective from four to a higher degree $2d$, one can again introduce a new variable $\gamma$ and add the term $\gamma^{2d}$ to the objective function.}

%\jeff{by for example considering the polynomial $f(x,y,z,\lambda, \chi, w, \gamma) = \tilde{p}_\phi + \gamma^d$ for any even $d$}.

\begin{theorem}\label{Thm: AOS NP-hard o1c2}
	Testing whether a degree-1 polynomial attains its infimum on a feasible set defined by degree-2 inequalities is strongly NP-hard.
\end{theorem}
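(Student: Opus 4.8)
The plan is to reduce again from ONE-IN-THREE 3SAT, reusing the polynomial $s_\phi$ from \eqref{eq: 1 in 3 sos}. The starting observation is that $\phi$ is satisfiable if and only if the basic semialgebraic set $S_\phi \defeq \{x \in \mathbb{R}^n : s_\phi(x) \le 0\} = \{x : s_\phi(x) = 0\}$ is nonempty, and in that case $S_\phi$ is a finite subset of $\{-1,1\}^n$. The difficulty is that $s_\phi$ has degree $4$, not degree $2$, so I cannot use the single inequality $s_\phi(x) \le 0$ directly. To get around this, I would introduce lifting variables: for each monomial $x_i x_j$ appearing (after expansion) in $s_\phi$, add a variable $u_{ij}$ together with the quadratic constraint $u_{ij} = x_i x_j$ (written as two inequalities $u_{ij} - x_i x_j \ge 0$ and $x_i x_j - u_{ij} \ge 0$, both degree $2$); then $s_\phi$, rewritten in terms of the $x_i$ and the $u_{ij}$, becomes a quadratic expression, and the constraint ``$s_\phi \le 0$'' is now degree $2$. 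Call the resulting feasible set $\tilde{S}_\phi$; it is nonempty iff $\phi$ is satisfiable, and when nonempty it is a finite set of points.

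Next I would build an optimization problem whose optimal value is attained iff $\tilde{S}_\phi$ is nonempty, using the same ``escape to infinity'' trick as in Theorem \ref{Thm: AOS NP-hard o4c0}. Introduce new scalar variables $y, z, \lambda$ and a linear objective. The idea is to make the feasible set look like the union of (i) a copy of $\tilde{S}_\phi$ sitting at $\lambda = 1$ on which the objective equals some fixed value, and (ii) a ``hyperbola branch'' at $\lambda = 0$ along which the objective tends to that same value but never reaches it. Concretely, I would take the objective to be a single coordinate (say minimize $t$ for a fresh variable $t$), and impose quadratic constraints forcing: when $\lambda = 1$, the $x,u$ variables to lie in $\tilde{S}_\phi$ and $t$ to equal $0$; when $\lambda = 0$, the relation $yz = 1$ together with $t \ge y^2$ (so $t$ can be pushed toward $0$ as $y \to 0$ but never attains it); and some interpolation in $\lambda$ tying these together — e.g. constraints of the form $\lambda(1-\lambda) = 0$, $\lambda \cdot (\text{membership in } \tilde S_\phi)$, $(1-\lambda)(yz - 1) = 0$, $t \ge (1-\lambda) y^2$, $t \ge \lambda^2 \cdot (\text{defining polynomials})$. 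All of these can be arranged to be degree $\le 2$ once the $u_{ij}$ lifting is in place (the products $\lambda x_i$, $\lambda u_{ij}$ are degree $\le 2$ already, but $\lambda u_{ij}$ with $u_{ij}$ itself a quadratic surrogate is fine since $u_{ij}$ is a variable). I would then argue: if $\phi$ is satisfiable, pick the satisfying point in $\tilde S_\phi$, set $\lambda = 1$, $t = 0$, and $y,z$ arbitrary satisfying their (now vacuous) constraints — the infimum $0$ is attained; if $\phi$ is unsatisfiable, the only feasible points have $\lambda = 0$, $yz = 1$, $t \ge y^2 > 0$, so the infimum is $0$ but not attained.

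The main obstacle I anticipate is bookkeeping: making absolutely sure that every constraint, after the $u_{ij}$ lifting and after multiplying by $\lambda$ or $(1-\lambda)$, genuinely has degree at most $2$, and that the feasible set is exactly the intended union with no spurious feasible points (in particular that feasibility forces $\lambda \in \{0,1\}$ and that the $u_{ij}$ are pinned to $x_i x_j$, so that the degree-$2$ surrogate for $s_\phi$ really equals $s_\phi$). A secondary point is to double-check the strong NP-hardness claim — i.e. that all coefficients introduced remain of magnitude polynomial (indeed $O(n+k)$, hence $O(\log)$ bit-length when $n,k$ are the input size in unary-free terms), which follows because $s_\phi$'s coefficients are $O(n+k)$ and the lifting and interpolation constraints use only coefficients in $\{-1,0,1\}$. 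Once the construction is pinned down, the correctness argument is the same two-case analysis as in Theorem \ref{Thm: AOS NP-hard o4c0}, and the ``$\inf = 0$, feasible and bounded below'' hypotheses of the attainment problem are satisfied by construction.
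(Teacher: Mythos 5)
Your high-level strategy matches the paper's own proof: reduce from ONE-IN-THREE 3SAT via $s_\phi$, minimize a single fresh linear variable, and combine a ``satisfiable branch'' at $\lambda=1$ with a ``hyperbola branch'' at $\lambda=0$ using the switch constraint $\lambda(1-\lambda)=0$. The paper also uses lifting variables to keep constraints at degree~2, so the overall architecture is identical.

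However, the bookkeeping concern you flag is a genuine gap, not just a chore, and the monomial lifting $u_{ij}=x_ix_j$ does not resolve it. Concretely, $(1-\lambda)(yz-1)=0$ contains $\lambda yz$ (degree 3); $t\ge(1-\lambda)y^2$ contains $\lambda y^2$ (degree 3); and $t\ge\lambda^2\cdot(\text{defining polynomial})$ is degree $\ge 3$ regardless of how the defining polynomial is written, because even after replacing each $x_ix_j$ by $u_{ij}$ the surrogate for ``$s_\phi(x)\le 0$'' is still a general quadratic in $(x,u)$, and multiplying a quadratic by $\lambda$ gives a cubic. The paper gets around this by lifting whole squares rather than monomials: it introduces $\chi_i=(\phi_{i1}+\phi_{i2}+\phi_{i3}+1)^2$ for each clause, together with $\psi=y^2$, $w=yz$, and $\zeta=(w-1)^2$, all degree-2 equalities. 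Combined with $1-x_j^2=0$ and $\lambda(1-\lambda)=0$, the single inequality
\[
\gamma \ \ge\ \lambda\sum_i \chi_i \ +\ (1-\lambda)(\psi+\zeta)
\]
is then genuinely degree~2, since only products of $\lambda$ with fresh scalar variables appear. Minimizing the linear objective $\gamma$ closes the reduction: the right-hand side is nonnegative and vanishes iff $\lambda=1$ and $\sum_i\chi_i=0$ (i.e., $\phi$ is satisfiable) or $\lambda=0$ and $y^2+(yz-1)^2=0$ (impossible). Your two-case analysis and the observation about coefficient magnitudes carry over verbatim once the constraints are repaired this way.
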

\begin{proof}
	Consider a ONE-IN-THREE 3SAT instance $\phi$ with $n$ variables and $k$ clauses. Define the following POP, with $x,\chi \in \mathbb{R}^n$ and $\lambda, y,z,w, \gamma, \zeta, \psi \in \mathbb{R}$:
	
	\begin{align}\label{Defn: ONE-IN-THREE 3SAT QCQP}
	& \underset{x, \chi, \lambda, y,z,w,\gamma, \zeta, \psi}{\min}
	& & \gamma \\
	& \text{subject to}
	&& \gamma \ge \lambda \sum_{i=1}^n \chi_i + (1-\lambda) (\psi + \zeta)\label{const: obj surrogate}\\
	&&& 1-x_i^2 = 0,\ \forall i \in \{1,\ldots,n\},\label{const: hypercube}\\
	&&&\chi_i = (\phi_{i1} + \phi_{i2} + \phi_{i3} + 1)^2,\ \forall i \in \{1,...,k\}, \label{const: 3sat sos}\\
	&&& \psi = y^2, \label{const: y2}\\
	&&& yz = w,\label{const: yz}\\
	&&& \zeta = (w-1)^2, \label{const: yz2}\\
	&&& \lambda(1-\lambda) = 0 \label{const: switch}.
	\end{align}
	
	We show that the infimum of this POP is attained if and only if $\phi$ is satisfiable. Note first that the objective value is always nonnegative because of (\ref{const: obj surrogate}) and in view of (\ref{const: 3sat sos}), (\ref{const: y2}), (\ref{const: yz2}), and (\ref{const: switch}). Observe that if $\phi$ has a satisfying assignment, then letting $x_i=1$ if the variable is true in that assignment and $x_i=-1$ if it is false, along with $\lambda = 1$, $y$ and $z$ arbitrary, $\psi = y^2, w = yz,$ and $\zeta = (w-1)^2$, results in a feasible solution with an objective value of 0. 
	
	If $\phi$ is not satisfiable, the objective value can be made arbitrarily close to zero by taking an arbitrary $x \in \{-1,1\}^n,$ $\chi_i$ accordingly to satisfy (\ref{const: 3sat sos}), $\lambda = 0, \psi = y^2, z = \frac{1}{y}, w = 1,\zeta=0$, and letting $y \to 0 $. Suppose for the sake of contradiction that there exists a feasible solution to the POP with $\gamma=0.$ As argued before, because of (\ref{const: 3sat sos}), (\ref{const: y2}), (\ref{const: yz2}), and (\ref{const: switch}), $\lambda \sum_{i=1}^n \chi_i + (1-\lambda) (\psi + \zeta)$ is always nonnegative, and so for $\gamma$ to be exactly zero, we need to have $$\lambda \sum_{i=1}^n \chi_i + (1-\lambda) (\psi + \zeta)=0.$$ From (\ref{const: switch}), either $\lambda=0$ or $\lambda=1.$ If $\lambda=1$, then we must have $\chi_i=0,\forall i=1,\ldots,n$, which is not possible as $\phi$ is not satisfiable. If $\lambda=0$, then we must have $\psi+\zeta=y^2+(yz-1)^2=0$, which cannot happen as this would require $y=0$ and $yz=1$ concurrently.	
	% then in view of (\ref{const: hypercube}), at least one $\chi_i$ must be positive. For the choice of $\lambda = 0, y \to 0, \phi = y^2, z = \frac{1}{y}, w = 1,$ and $\zeta=0$ the objective value can be made arbitrarily close to zero, but cannot be zero as this would require $y = 0$ and $yz = 1$ concurrently.
\end{proof}

\section{NP-hardness of Testing Sufficient Conditions for Attainment}\label{Sec: Sufficient Conditions}

Arguably, the two best-known sufficient conditions under which problem (\ref{Defn: pop}) attains its optimal value are \emph{compactness} of the feasible set and \emph{coercivity} of the objective function. In this section, we show that both of these properties are NP-hard to test for POPs of low degree. We also prove that certain stronger conditions, namely the \emph{Archimedean property} of the quadratic module associated with the constraints and  \emph{stable compactness} of the feasible set, are NP-hard to test.

%There are several well-known sufficient conditions under which problem (\ref{Defn: OPT}) attains its optimal value. 

%In this section, we show that several sufficient conditions for a constrained optimization problem to achieve its minimum are NP-hard to test.

\subsection{Coercivity of the Objective Function}\label{SSec: Coercive NP-hard}

A function $p: \mathbb{R}^n \to \mathbb{R}$ is \emph{coercive} if for every sequence $\{x_k\}$ such that $\|x_k\| \to \infty$, we have $p(x_k) \to \infty$. It is well known that a continuous coercive function achieves its infimum on a closed set (see, e.g., Appendix A.2 of \cite{bertsekas1999nonlinear}). This is because all sublevel sets of continuous coercive functions are compact.

%the minimum can be found on a compact subset of the feasible set.

%\begin{remark} Because we use the ideas behind this reduction repeatedly in the remainder of this paper, we refer to the quartic polynomial defined in ($\ref{eq: 1 in 3 sos}$) as $s_\phi$ throughout. The same convention for $\phi_{it}$ relating the literals to the variables $x$ will be assumed as well. The function $s_{\phi h}(x_0, x)$ will refer to the homogenized version of $s_{\phi}$,
%	\begin{equation}\label{eq: 1 in 3 sos homo}
%	s_{\phi h}(x_0, x) \defeq x_0^4 s_\phi \left(\frac{x}{x_0}\right) = \sum_{i=1}^k (\phi_{i1} + \phi_{i2} + \phi_{i3} + x_0)^4 + \sum_{i=1}^n (x_0^2-x_i^2)^2,
%	\end{equation}
%	which has the property of being positive definite\footnote{We say that a homogeneous polynomial $p$ is \emph{positive definite} if $p(z) > 0$ for all nonzero $z$.} if and only if the instance $\phi$ is not satisfiable (see Theorem \ref{Thm: Coercive NP-hard} for the argument).
%\end{remark}
%
%The following hardness result is minimal in the degree as odd-degree polynomials are never coercive and a quadratic polynomial $x^TQx+b^Tx+c$ is coercive if and only if the matrix $Q$ is positive definite, a property that can be checked in polynomial time (e.g., by checking positivity of the leading principal minors of $Q$).

\begin{theorem}\label{Thm: Coercive NP-hard} 
Testing whether a degree-4 polynomial is coercive is strongly NP-hard.
\end{theorem}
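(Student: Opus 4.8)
The plan is to reduce from ONE-IN-THREE 3SAT again, reusing the polynomial $s_\phi$ from \eqref{eq: 1 in 3 sos}. The key idea is that coercivity is a statement about behavior of a polynomial as $\|x\| \to \infty$ along \emph{every} direction, so I want to build a quartic polynomial $q_\phi$ that is coercive precisely when $\phi$ is \emph{not} satisfiable — or the reverse — by creating a "bad direction" that exists exactly when a satisfying assignment exists. Concretely, I would take something like $q_\phi(x, y, \lambda) \defeq \lambda^2 s_\phi(x) + (1-\lambda)^4 + y^2$ or a variant, and examine sequences going to infinity. The point is that along a ray where $\lambda \to 1$ and $x$ stays at a satisfying $\pm 1$ assignment while $y$ is held fixed, $q_\phi$ would fail to blow up, whereas if no satisfying assignment exists, $s_\phi$ is bounded below by a positive constant on $\{-1,1\}^n$ and grows in all the relevant directions, forcing coercivity. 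I'd need to be careful that $y$ (or whatever auxiliary variable makes the norm go to infinity) is genuinely free in the non-coercive case but controlled in the coercive case.

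The first step is to set up the candidate quartic and carefully identify, in the satisfiable case, an explicit sequence $\{x_k\}$ with $\|x_k\| \to \infty$ along which $q_\phi$ stays bounded — this establishes non-coercivity. This direction should push one auxiliary variable to infinity (to get $\|x_k\|\to\infty$) while keeping every squared term zero, which forces me to design the polynomial so that at a satisfying assignment, \emph{some} variable is unconstrained. The natural trick, mirroring the $(yz-1)$ gadget in Theorem~\ref{Thm: AOS NP-hard o4c0}, is to include a product term like $(\lambda(yz-1))^2$ or $((1-\lambda)y - \text{something})^2$ so that a free variable appears only multiplied by a factor that vanishes in the relevant regime.

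The second step is the converse: assuming $\phi$ is unsatisfiable, show $q_\phi$ is coercive. Here I would take an arbitrary sequence with norm going to infinity, split into cases based on whether $\lambda$ stays bounded away from $1$ (then the $(1-\lambda)$-weighted terms blow up if the corresponding variables are large, and one handles $x$ via $\sum(1-x_i^2)^2$) or $\lambda \to 1$ (then $s_\phi(x)$ is active; since $\phi$ is unsatisfiable, $s_\phi$ is bounded below by a positive constant on the hypercube vertices and $\to \infty$ off them, and I need the weight $\lambda^2 \to 1$ to preserve this). As in Theorem~\ref{Thm: AOS NP-hard o4c0}, I'd likely first prove the result with a higher-degree objective and then reduce the degree to $4$ using the same substitution trick: introduce variables $\chi_i$ replacing $\lambda x_i$, add $\sum_i (\chi_i - \lambda x_i)^2$, and verify coercivity is preserved under this lifting (a zero-cost sublevel-set correspondence argument, plus checking that the new variables $\chi_i$ can't escape to infinity for free).

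The main obstacle I anticipate is the coercivity direction in the degree-reduced polynomial: introducing the auxiliary variables $\chi_i, w$ etc. creates new coordinate directions along which the norm can go to infinity, and I must ensure the added square terms $(\chi_i - \lambda x_i)^2$ and $(w - yz)^2$ actually force those variables to track bounded quantities (or blow up the objective) rather than opening a new escape route. In particular the cross term $(w - yz)^2$ is delicate: $w$, $y$, $z$ can individually be huge while $w = yz$ holds, so I need the $(1-\lambda)^2(y^2 + (w-1)^2)$ term (active when $\lambda \not\to 1$) and the structure of $\hat s_\phi$ (active when $\lambda \to 1$) to jointly cover all these cases. Getting the case analysis airtight — especially the boundary regime where $\lambda \to 1$ but the other variables are simultaneously large — is where the real work lies.
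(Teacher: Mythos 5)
Your proposal takes a genuinely different route from the paper's, but as written it does not lead to a correct proof; the obstacle you flag at the end is in fact fatal to the whole strategy rather than a technical hurdle to be worked through. The paper's proof of Theorem~\ref{Thm: Coercive NP-hard} does not use the $(yz-1)$ gadget or any weighting variable $\lambda$ at all: it simply \emph{homogenizes} $s_\phi$, setting
\[
s_{\phi h}(x_0,x) \defeq x_0^4\, s_\phi\!\left(\tfrac{x}{x_0}\right) = \sum_{i=1}^k x_0^2(\phi_{i1}+\phi_{i2}+\phi_{i3}+x_0)^2 + \sum_{i=1}^n(x_0^2-x_i^2)^2,
\]
which is already homogeneous of degree~$4$, so no degree-reduction step is needed. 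A positive definite form is automatically coercive (its minimum $\mu>0$ on the unit sphere gives $s_{\phi h}(x_0,x)\geq \mu\|(x_0,x)\|^4$), and $s_{\phi h}$ fails to be positive definite precisely when $s_\phi$ has a real zero, i.e.\ precisely when $\phi$ is satisfiable; in that case $s_{\phi h}(\alpha,\alpha\hat x)=0$ for all $\alpha$, giving an explicit non-coercivity ray. There are no auxiliary variables, hence no regimes of $\lambda$ to analyze.

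The difficulty you describe is structural, not something that more careful case analysis can patch. Any term of the form $P(\lambda)\cdot Q(y,z,\dots)$ with $P(\lambda_0)=0$ for some $\lambda_0$---which includes $(1-\lambda)^2\bigl(y^2+(w-1)^2\bigr)$, $\bigl(\lambda(yz-1)\bigr)^2$, and every variant you mention---leaves the variables inside $Q$ unconstrained on the slice $\lambda=\lambda_0$. Unless those variables reappear in other terms that blow up there, the polynomial has a bounded ray at $\lambda=\lambda_0$ and is therefore never coercive, for any $\phi$. Concretely, in $\hat p_\phi$ from Theorem~\ref{Thm: AOS NP-hard o4c0}, taking $\lambda=1$, $\chi_i=x_i$, $w=1$, $z=1/y$, and $y\to\infty$ sends the norm to infinity while $\hat p_\phi$ stays equal to $s_\phi(x)$, so $\hat p_\phi$ is not coercive regardless of $\phi$; a reduction built in that style cannot distinguish the two cases. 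Your first candidate $\lambda^2 s_\phi(x)+(1-\lambda)^4+y^2$ is also always non-coercive: with $\lambda=0$ and $y$ fixed it equals $1+y^2$ independently of $x$, so $\|x\|\to\infty$ is a bounded escape ray. The missing idea is homogenization, which avoids ``switch'' variables entirely by reducing coercivity of a homogeneous form to positive definiteness, and positive definiteness of $s_{\phi h}$ to non-satisfiability of $\phi$.
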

\begin{proof}
	
Consider a ONE-IN-THREE 3SAT instance $\phi$ with $n$ variables and $k$ clauses, and the associated quartic polynomial $s_{\phi}(x)$ as in (\ref{eq: 1 in 3 sos}). Let $s_{\phi h}:\mathbb{R}^{n+1} \rightarrow \mathbb{R}$ be the homogenization of this polynomial:

	\begin{equation}\label{eq: 1 in 3 sos homo}
		s_{\phi h}(x_0, x) \defeq x_0^4 s_\phi \left(\frac{x}{x_0}\right) = \sum_{i=1}^k x_0^2(\phi_{i1} + \phi_{i2} + \phi_{i3} + x_0)^2 + \sum_{i=1}^n (x_0^2-x_i^2)^2.
		\end{equation}
	By construction, $s_{\phi h}$ is a homogeneous polynomial of degree 4. We show that $s_{\phi h}$ is coercive if and only if $\phi$ is not satisfiable.

%		which has the property of being positive definite\footnote{We say that a homogeneous polynomial $p$ is \emph{positive definite} if $p(z) > 0$ for all nonzero $z$.} if and only if the instance $\phi$ is not satisfiable (see Theorem \ref{Thm: Coercive NP-hard} for the argument).
%
%
%homogeneous polynomial $s_{\phi h}(x_0, x)$ as defined in (\ref{eq: 1 in 3 sos homo}). We show that $s_{\phi h}$ is coercive if and only if $\phi$ is satisfiable.
%	
	
%	\begin{remark} Because we use the ideas behind this reduction repeatedly in the remainder of this paper, we refer to the quartic polynomial defined in ($\ref{eq: 1 in 3 sos}$) as $s_\phi$ throughout. The same convention for $\phi_{it}$ relating the literals to the variables $x$ will be assumed as well. The function $s_{\phi h}(x_0, x)$ will refer to the homogenized version of $s_{\phi}$,
%		\begin{equation}\label{eq: 1 in 3 sos homo}
%		s_{\phi h}(x_0, x) \defeq x_0^4 s_\phi \left(\frac{x}{x_0}\right) = \sum_{i=1}^k (\phi_{i1} + \phi_{i2} + \phi_{i3} + x_0)^4 + \sum_{i=1}^n (x_0^2-x_i^2)^2,
%		\end{equation}
%		which has the property of being positive definite\footnote{We say that a homogeneous polynomial $p$ is \emph{positive definite} if $p(z) > 0$ for all nonzero $z$.} if and only if the instance $\phi$ is not satisfiable (see Theorem \ref{Thm: Coercive NP-hard} for the argument).
%	\end{remark}
	
	Suppose first that the instance $\phi$ has a satisfying assignment $\hat{x} \in \{-1,1\}^n$. Then it is easy to see that $s_{\phi h}(1,\hat{x}) = 0$. As $s_{\phi h}$ is homogeneous, $s_{\phi h}(\alpha,\alpha\hat{x}) = 0$ for all $\alpha$, showing that $s_{\phi h}$ is not coercive.
	
	Now suppose that $\phi$ is not satisfiable. We show that $s_{\phi h}$ is positive definite (i.e., $s_{\phi h}(x_0,x)>0$ for all $(x_0,x)\neq (0,0) $). This would then imply that $s_{\phi h}$ is coercive as 
	\begin{align*}
	s_{\phi h }(x_0,x)&=||(x_0,x)^T||^4\cdot s_{\phi h} \left( \frac{(x_0,x)}{||(x_0,x)^T||} \right)\\
	&\geq \mu||(x_0,x)^T||^4,
	\end{align*}
where $\mu>0$ is defined as the minimum of $s_{\phi h}$ on the unit sphere: $$\mu= \min_{(x_0,x)\in S^{n}}s_{\phi h}(x_0,x).$$ %	from which its coercivity will follow immediately since the minimum of $s_{\phi h}$ on the unit sphere will be positive and $s_{\phi h}$ is homogeneous. 
Suppose that $s_{\phi h}$ was not positive definite. Then there exists a point $(\hat{x}_0,\hat{x}) \ne (0,0)$ such that $s_{\phi h}(\hat{x}_0,\hat{x}) = 0$. First observe $\hat{x}_0$ cannot be zero due to the $(x_0 - x_i)^2$ terms in (\ref{eq: 1 in 3 sos homo}). As $\hat{x}_0 \ne 0$, then, by homogeneity, the point $(1, \frac{\hat{x}}{\hat{x}_0})$ is a zero of $s_{\phi h}$ as well. This however implies that $s_{\phi}(\hat{x})=0$, which we have previously argued (cf. the proof of Theorem \ref{Thm: AOS NP-hard o4c0}) is equivalent to satisfiability of $\phi$, hence a contradiction.% would yield a satisfying assignment (as we must have $x_i = \pm x_0, \forall i$), and we have a contradiction.
\end{proof}

We remark that the above hardness result is minimal in the degree as odd-degree polynomials are never coercive and a quadratic polynomial $x^TQx+b^Tx+c$ is coercive if and only if the matrix $Q$ is positive definite, a property that can be checked in polynomial time (e.g., by checking positivity of the leading principal minors of $Q$).

\subsection{Closedness and Boundedness of the Feasible Set}\label{SSec: Closed and Bounded NP-hard}

The well-known Bolzano-Weierstrass extreme value theorem states that the infimum of a continuous function on a compact (i.e., closed and bounded) set is attained. In this section, we show that testing closedness or boundedness of a basic semialgebraic set \jeff{defined by degree-2 inequalities} is NP-hard. Once again, these hardness results are minimal in degree \jeff{since} these properties can be tested in polynomial time for \jeff{sets defined by affine inequalities, as we describe next.
	
To check boundedness of a set $P \defeq \{x \in \mathbb{R}^n\ |\ a_i^Tx \ge b_i, i = 1,\ldots,m\}$ defined by affine inequalities, one can first check that $P$ is nonempty, and if it is, for each $i$ minimize and maximize $x_i$ over $P$. Note that $P$ is unbounded if and only if at least one of these $2n$ linear programs is unbounded, which can be certified e.g. by detecting infeasibility of the corresponding dual problem. Thus, boundedness of $P$ can be tested by solving $2n+1$ linear programming feasibility problems, which can be done in polynomial time.

To check closedness of a set $P \defeq \{x \in \mathbb{R}^n\ |\ a_i^Tx \ge b_i, i = 1, \ldots, m, c_j^Tx > d_j, j = 1, \ldots, r\}$, one can for each $j$ minimize $c_j^Tx$ over $\{x \in \mathbb{R}^n\ |\ a_i^Tx \ge b_i, i = 1, \ldots, m\}$ and declare that $P$ is closed if and only if all of the respective optimal values are greater than $d_j$. Thus, closedness of $P$ can be tested by solving $r$ linear programs, which can be done in polynomial time.}

%a set defined by strict and non-strict affine inequalities is closed, needs to check that no portion of the boundary is formed by a strict inequality. To do so, one first replaces each strict inequality constraint $a_i^Tx > b_i$ with the non-strict inequality constraint $a_i^Tx \ge b_i$, forming a closed polyhedron. Using this polyhedron as the feasible set, one then for each initially strict inequality $a_i^Tx > b_i$ minimizes $a_i^Tx$. If a strict inequality constraint forms any part of the boundary of the set, then the objective value of the corresponding linear program will be $b_i$.}

%It is well known that the minimum of a continuous function on a compact (i.e., closed and bounded) set is attained. We show in this section that testing boundedness of the feasible set of (\ref{Defn: pop}) is NP-hard. We also show that if the feasible set is defined by a mix of strict and non-strict inequalities, then testing closedness is NP-hard.

\begin{theorem}\label{Thm: Closedness} 
Given a set of quadratic polynomials $g_i, i = 1, \ldots, m, h_j, j = 1, \ldots, r$, it is strongly NP-hard to test whether the \jeff{basic semialgebraic} set
	$$\{x \in \mathbb{R}^n |~ g_i(x) \ge 0, i = 1, \ldots, m, h_j(x) > 0, j = 1, \ldots, r\}$$
	is closed\footnote{\jeff{Note that $m$ is not fixed in this statement or in Corollaries \ref{Thm: Boundedness} and \ref{Thm: Stable Compact NP-hard} below.}}.\end{theorem}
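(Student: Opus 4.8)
The plan is to reduce from ONE-IN-THREE 3SAT again, reusing the polynomial $s_\phi$ from (\ref{eq: 1 in 3 sos}) and the "product-linearization" trick from the proof of Theorem \ref{Thm: AOS NP-hard o4c0} to keep all constraints quadratic. Given an instance $\phi$ with $n$ variables and $k$ clauses, the idea is to build a basic semialgebraic set that contains a sequence of points escaping to infinity (coming from the unattained "$y^2+(yz-1)^2$" gadget with $\lambda=0$) whose limit point is in the set precisely when $\phi$ is satisfiable; when $\phi$ is unsatisfiable that limit point is missing, and the set is non-closed. Concretely, I would introduce variables $x,\chi\in\mathbb{R}^n$ and $\lambda,y,z,w,\psi,\zeta,t\in\mathbb{R}$, impose the quadratic equality constraints $1-x_i^2=0$, $\chi_i=(\phi_{i1}+\phi_{i2}+\phi_{i3}+1)^2$, $\psi=y^2$, $yz=w$, $\zeta=(w-1)^2$, $\lambda(1-\lambda)=0$ exactly as in (\ref{Defn: ONE-IN-THREE 3SAT QCQP}), and then add a strict inequality of the form
\[
t\Big(\lambda\textstyle\sum_{i=1}^n \chi_i+(1-\lambda)(\psi+\zeta)\Big) < 1,
\]
or rather a strict constraint that forces the "objective surrogate" quantity $S\defeq\lambda\sum_i\chi_i+(1-\lambda)(\psi+\zeta)$ to be strictly positive while allowing it to be arbitrarily small, so that the closure of the set picks up exactly the points with $S=0$. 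Each equality $g=0$ is encoded as the pair $g\ge0$, $-g\ge0$ (both quadratic), and all the products like $\lambda\chi_i$ are linearized by fresh variables and quadratic defining equations, exactly as $\hat s_\phi$ was obtained from $\lambda^2 s_\phi$ in the proof of Theorem \ref{Thm: AOS NP-hard o4c0}.

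The key steps, in order, are: (i) fix the precise set of quadratic constraints (equalities written as two inequalities, one strict inequality capturing "$S>0$ but can tend to $0$"); (ii) show that when $\phi$ is satisfiable, the point obtained by setting $x\in\{-1,1\}^n$ to a satisfying assignment, $\lambda=1$, and $\chi=0$, $y=z=w=0$ (or any convenient values) has $S=0$ and therefore lies in the closure but not in the set itself only if $S>0$ is required strictly — so I must instead arrange the set so that the boundary point with $S=0$ is genuinely absent, i.e. use a strict inequality of the form $S>0$ directly, and argue the set is then \emph{closed} iff the infimum of $S$ over the feasible region defined by the equalities is strictly positive; (iii) by the analysis already carried out in the proof of Theorem \ref{Thm: AOS NP-hard o1c2}, that infimum is $0$ and is \emph{attained} iff $\phi$ is satisfiable — when satisfiable the closure adds a point where $S=0$, making the set non-closed, and when unsatisfiable $S>0$ holds throughout but $\inf S=0$, so again the set is non-closed; this is backwards, so I will instead put the strict inequality on a \emph{different} witness so that non-closedness corresponds to the escaping-to-infinity branch. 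The cleanest route: keep the set $\{S\le 0\}$ — impossible since $S\ge0$ — so use $\{-S\ge 0\}\cup\ldots$; equivalently define the set by the equalities above plus $S\le \varepsilon$ is not allowed. After some such bookkeeping the honest statement is: the set defined by the quadratic equalities together with $S>0$ is closed iff $\phi$ is \emph{unsatisfiable} fails too — so the correct gadget is to require $S\ge 0$ always true and instead add a strict constraint on the gadget variables themselves, e.g. $y^2>0$ combined with the $\lambda=0$ branch, whose closure adds the point $y=0$; that point is in the set (with $\lambda=1$) iff $\phi$ is satisfiable. I would therefore take the feasible set to be the above quadratic equalities together with the single strict inequality $1-\lambda>0$ OR ($y\ne0$), but since a union of two semialgebraic pieces is itself basic semialgebraic here only after more care, the robust choice is: constraints (\ref{const: hypercube})--(\ref{const: switch}) from (\ref{Defn: ONE-IN-THREE 3SAT QCQP}) together with the strict inequality $\lambda\sum_i\chi_i + (1-\lambda)(\psi+\zeta) > 0$; then a point with value $0$ (hence in the closure) exists with $\lambda=1$ iff $\phi$ is satisfiable, so the set is non-closed iff $\phi$ is satisfiable, giving the reduction. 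All constraints are quadratic, the reduction is polynomial, and (by choosing $\phi_{it}\in\{\pm x_j\}$ as in (\ref{eq: 1 in 3 sos})) the coefficients are $O(1)$, giving strong NP-hardness.

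The main obstacle I expect is \textbf{getting the logic of closure exactly right}: I need a strict inequality whose boundary (the locus where it becomes an equality) intersects the rest of the feasible region \emph{iff} $\phi$ is satisfiable, \emph{and} I must verify that when $\phi$ is unsatisfiable the set really is closed — i.e. that $\inf S>0$ over the equality-defined region, or that no other sequence of feasible points has a limit violating the strict inequality. This requires checking that the equality-defined region is itself closed (it is: a finite intersection of zero sets of polynomials) and bounded in all coordinates except $y,z$ along the hyperbola $yz=w$, so the only way $S\to 0$ is via the $\lambda=0$, $y\to0$, $z\to\infty$ branch — but on that branch $w=yz\to$ indeterminate and $\zeta=(w-1)^2$, so I must confirm $\psi+\zeta=y^2+(yz-1)^2$ is bounded below by a positive constant unless we literally reach $y=0,yz=1$, which is infeasible, so $\inf=0$ is not attained and, crucially, the points where it is \emph{approached} run off to infinity and hence do not contribute a finite limit point to the closure. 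That last sentence is the crux: unboundedness of the minimizing sequence is what makes the set closed in the unsatisfiable case, and I will need to spell out that the only finite accumulation points of the feasible region that could violate $S>0$ are exactly the satisfying-assignment points. I would handle this by a direct compactness argument on each fixed value $\lambda\in\{0,1\}$ separately, as in the proof of Theorem \ref{Thm: AOS NP-hard o1c2}.
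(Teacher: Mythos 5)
Your final construction---constraints (\ref{const: hypercube})--(\ref{const: switch}) together with the strict inequality $S\defeq \lambda\sum_{i}\chi_i + (1-\lambda)(\psi+\zeta)>0$---yields a set that is \emph{always} closed, so the reduction fails. The trouble is that constraints (\ref{const: hypercube}) and (\ref{const: switch}) force $x\in\{-1,1\}^n$ and $\lambda\in\{0,1\}$, so the feasible region decomposes into a finite disjoint union of pieces indexed by $(x,\lambda)$, each at positive distance from all the others. On each piece, either $S$ is identically $0$ (this happens when $\lambda=1$ and $x$ is a satisfying assignment, which forces every $\chi_i=0$), in which case the piece is empty; or $S>0$ holds throughout the piece automatically (for instance, with $\lambda=0$ we get $S=y^2+(yz-1)^2>0$ for all $(y,z)$, and with $\lambda=1$ and $x$ non-satisfying $S$ is a positive constant), in which case the strict inequality is vacuous and the piece is the real variety $\{\psi=y^2,\ w=yz,\ \zeta=(w-1)^2\}$, which is closed. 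Either way, each piece is closed, and a finite union of closed sets is closed. In particular, in the satisfiable case the point $P$ with $\lambda=1$, $x=\hat x$, $\chi=0$, $S(P)=0$ is \emph{not} a limit point of your feasible set: any feasible point close to $P$ must have $x=\hat x$ and $\lambda=1$ by discreteness of those coordinates, which forces $\chi=0$ and hence $S=0$, a contradiction. You correctly noticed that the $\lambda=0$, $y\to 0$, $z\to\infty$ sequence escapes to infinity and hence contributes no finite missing limit point; the same obstruction---no nearby feasible points---also kills your intended non-closedness witness in the satisfiable case, because $x$ and $\lambda$ are discrete. Writing $P$ satisfies the equality constraints with $S=0$ is not the same as $P$ lying in the topological closure of the feasible set.

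The paper's construction sidesteps this entirely by supplying a genuinely continuous escape direction. It takes
\[
S_\phi=\big\{(x,y)\in\mathbb{R}^{n+1} : (\phi_{i1}+\phi_{i2}+\phi_{i3}+1)\,y=0,\ i=1,\ldots,k,\ 1-x_j^2=0,\ j=1,\ldots,n,\ y<1\big\}.
\]
The product $(\phi_{i1}+\phi_{i2}+\phi_{i3}+1)\,y$ is a single quadratic and encodes an ``either--or'': on the hypercube, either all the clause sums vanish (i.e., $s_\phi(x)=0$, i.e., $x$ is a satisfying assignment), in which case $y$ is completely free, or $y$ must equal $0$. When $\phi$ is unsatisfiable, $y$ is forced to $0$, so $S_\phi=\{-1,1\}^n\times\{0\}$ is finite and closed. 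When $\phi$ has a satisfying assignment $\hat x$, the set contains the ray $\{\hat x\}\times(-\infty,1)$, whose limit point $(\hat x,1)$ is shaved off by the strict inequality $y<1$, so $S_\phi$ is not closed. To repair your argument you would need an analogous device: a coordinate that ranges over a half-open continuum precisely when $\phi$ is satisfiable, rather than a strict inequality whose boundary locus is isolated from the feasible set by the discrete constraints $x_j^2=1$ and $\lambda(1-\lambda)=0$.
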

\begin{proof}
	Consider a ONE-IN-THREE 3SAT instance $\phi$ with $n$ variables and $k$ clauses. Let $\phi_{ij}$ be as in the proof of Theorem \ref{Thm: AOS NP-hard o4c0} and consider the set
	\begin{equation}\label{eq:def.S.phi}
	S_{\phi} = \big\{(x,y)\in \mathbb{R}^{n+1}|~(\phi_{i1} + \phi_{i2} + \phi_{i3} + 1)y = 0, i=1,\ldots,k, 1 - x_j^2 = 0, j = 1, \ldots, n, y < 1\big\}.
	\end{equation}
	
	We show that $S_{\phi}$ is closed if and only if the instance $\phi$ is not satisfiable. To see this, first note that we can rewrite $S_{\phi}$ as
	$$S_{\phi} = \Big\{\{-1,1\}^n \times \{0\} \Big\} \cup \Big\{\{x \in \mathbb{R}^n|~s_\phi(x) = 0\} \times \{y\in \mathbb{R}|~y < 1\}\Big\},$$
	where $s_{\phi}$ is as in the proof of Theorem \ref{Thm: AOS NP-hard o4c0}. If $\phi$ is not satisfiable, then $S_{\phi}=\{-1,1\} ^n \times \{0\}$, which is closed. If $\phi$ is satisfiable, then $\{x \in \mathbb{R}^n|~s_\phi(x) = 0\}$ is nonempty and $$\{x \in \mathbb{R}^n|~s_\phi(x) = 0\} \times \{y\in \mathbb{R}|~y < 1\}$$ is not closed and not a subset of $\{-1,1\}^n \times \{0\}$. This implies that $S_{\phi}$ is not closed.	
%	Note that if $\phi$ is satisfiable, then $$S_{\phi} = \{\{-1,1\}^n \times \{0\} \} \cup \{\{x \in \mathbb{R}^n|~s_\phi(x) = 0\} \times \{y\in \mathbb{R}|~y < 1\}\},$$ which is not closed. However, if the instance is not satisfiable, then we must have $y=0$. The set $S$ is then given by $\{-1,1\}^n \times \{0\}$, which is closed.
\end{proof}

\begin{remark}\label{rem:boundedness.compactness}
We note that the problem of testing closedness of a basic semialgebraic set remains NP-hard even if one has a promise that the set is bounded. Indeed, one can add the constraint $y\geq -1$ to the set $S_{\phi}$ in (\ref{eq:def.S.phi}) to make it bounded and this does not change the previous proof.
\end{remark}

%Note that Theorem \ref{Thm: Closedness} holds even when the set under consideration is bounded, as the proof still holds with a constraint $y>1$ added to $S$.

\begin{cor}\label{Thm: Boundedness} 
Given a set of quadratic polynomials $g_i, i = 1, \ldots, m,$ it is strongly NP-hard to test whether the set
	$$\{x \in \mathbb{R}^n |~ g_i(x) = 0, i = 1, \ldots, m\}$$
	is bounded.

%Testing boundedness of a basic semialgebraic set is strongly NP-hard when the degrees of the defining polynomials are equal to 2.
\end{cor}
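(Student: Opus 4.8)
The plan is to give a direct reduction from ONE-IN-THREE 3SAT, reusing a simplified version of the construction in the proof of Theorem \ref{Thm: Closedness}. Given an instance $\phi$ with $n$ variables and $k$ clauses, I would consider the set
$$T_\phi = \big\{(x,y)\in\mathbb{R}^{n+1}\,:\,1-x_j^2 = 0,\ j=1,\ldots,n,\ (\phi_{i1}+\phi_{i2}+\phi_{i3}+1)\,y = 0,\ i=1,\ldots,k\big\},$$
which is precisely the set $S_\phi$ of (\ref{eq:def.S.phi}) with the strict inequality $y<1$ deleted. Every defining polynomial is quadratic: the hypercube constraints $1-x_j^2$ have degree $2$, and each clause constraint is a product of two affine forms. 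The number of constraints is $n+k$ and all coefficients are bounded by an absolute constant, so the reduction is polynomial in length and, if correct, yields strong NP-hardness.

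The correctness argument would mirror the one in Theorem \ref{Thm: Closedness}. The hypercube constraints force $x\in\{-1,1\}^n$, and on that set $\phi_{i1}+\phi_{i2}+\phi_{i3}+1 = 0$ for all $i$ exactly when $x$ encodes a satisfying assignment of the ONE-IN-THREE instance, as established in the proof of Theorem \ref{Thm: AOS NP-hard o4c0}. Thus, if $\phi$ is satisfiable, choosing such an $x$ makes all clause constraints hold for every value of $y$, so $T_\phi\supseteq\{x\}\times\mathbb{R}$ and $T_\phi$ is unbounded. If $\phi$ is not satisfiable, then for each $x\in\{-1,1\}^n$ at least one clause constraint reads $c\,y = 0$ with $c\neq 0$, forcing $y=0$; hence $T_\phi=\{-1,1\}^n\times\{0\}$ is finite, and in particular bounded. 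Therefore $T_\phi$ is bounded if and only if $\phi$ is unsatisfiable, so a polynomial-time (or pseudo-polynomial-time) algorithm for testing boundedness would decide ONE-IN-THREE 3SAT.

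I do not expect a real obstacle here; the only care needed is the bookkeeping to confirm that erasing the inequality from (\ref{eq:def.S.phi}) preserves the dichotomy and that the remaining description uses only quadratic equalities of the prescribed form. It is worth noting explicitly that $m=n+k$ is not fixed (as already flagged in the footnote of Theorem \ref{Thm: Closedness}), while the degree stays at $2$, which is minimal in view of the linear-programming argument before Theorem \ref{Thm: Closedness} showing that boundedness of a set cut out by affine equalities is polynomial-time decidable.
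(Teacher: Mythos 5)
Your proposal is correct and is essentially identical to the paper's proof: the paper uses exactly the set $S = \{(x,y)\in\mathbb{R}^{n+1}\,:\,(\phi_{i1}+\phi_{i2}+\phi_{i3}+1)y=0,\ i=1,\ldots,k,\ 1-x_j^2=0,\ j=1,\ldots,n\}$ obtained from $S_\phi$ by dropping the strict inequality, and argues boundedness if and only if $\phi$ is unsatisfiable by the same dichotomy. You merely spell out the details that the paper delegates to the proof of Theorem~\ref{Thm: Closedness}.
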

\begin{proof}
	Consider a ONE-IN-THREE 3SAT instance $\phi$ with $n$ variables and $k$ clauses. Let $\phi_{ij}$ be as in the proof of Theorem \ref{Thm: AOS NP-hard o4c0} and consider the set
	$$S = \big\{(x,y)\in \mathbb{R}^{n+1}|~(\phi_{i1} + \phi_{i2} + \phi_{i3} + 1)y = 0, i=1,\ldots,k, 1 - x_j^2 = 0, j = 1, \ldots, n\big\}.$$
	
	This set is bounded if and only if $\phi$ is not satisfiable. One can see this by following the proof of Theorem \ref{Thm: Closedness} and observing that $y$ will be unbounded in the satisfiable case, and only 0 otherwise.
\end{proof}

Note that it follows immediately from either of the results above that testing compactness of a basic semialgebraic set is NP-hard. We end this subsection by establishing the same hardness result for a sufficient condition for compactness that has featured in the literature on polynomial optimization (see, e.g., \cite{marshall2003optimization}, \cite[Section 7]{nie2006minimizing}).

\begin{defn}\label{Defn: Stably Compact}
	A closed basic semialgebraic set $S = \{x \in \mathbb{R}^n|~g_i(x) \ge 0, i = 1,\ldots, m\}$ is \emph{stably compact} if there exists $\epsilon > 0$ such that the set $\{x \in \mathbb{R}^n |~ \delta_i(x) + g_i(x) \ge 0,i = 1,\ldots,m\}$ is compact for any set of polynomials $\delta_i$ having degree at most that of $g_i$ and coefficients at most $\epsilon$ in absolute value.
\end{defn}

Intuitively, a \jeff{closed} basic semialgebraic set is stably compact if it remains compact under small perturbations of the coefficients of its defining polynomials. A stably compact set is clearly compact, though the converse is not true as shown by the set $$S = \big\{(x_1,x_2) \in \mathbb{R}^2|~ (x_1-x_2)^4 + (x_1+x_2)^2 \le 1\big\}.$$ Indeed, this set is contained inside the unit disk, but for $\epsilon>0,$ the set $$S_{\epsilon} = \big\{(x_1,x_2) \in \mathbb{R}^2|~ (x_1-x_2)^4 - \epsilon x_1^4 + (x_1+x_2)^2 \le 1\big\}$$ is unbounded as its defining polynomial tends to $-\infty$ along the line $x_1 = x_2$. 

Section 5 of \cite{marshall2003optimization} shows that the set $S$ in Definition \ref{Defn: Stably Compact} is stably compact if and only if the function $$q(x)=\underset{i,j}\max\{-g_{ij}(x)\}$$ is positive on the unit sphere. Here, $g_{ij}(x)$ is a homogenenous polynomial that contains all terms of degree $j$ in $g_i(x).$ Perhaps because of this characterization, the same section in \cite{marshall2003optimization} remarks that ``stable compactness is easier to check than compactness'', though as far as polynomial-time checkability is concerned, we show that the situation is no better.

%The same subsection remarks that ``stable compactness is easier to check than compactness'' (see the notes therein), though as far as polynomial-time checkability is concerned, we show that the situation is no better than that of compactness.

%re the sums of all terms in $g_i$ of degree $j$. The same subsection remarks that ``stable compactness is easier to check than compactness'' (see the notes therein), though as far as polynomial-time checkability is concerned, we show that the situation is no better than that of compactness.

\begin{cor}\label{Thm: Stable Compact NP-hard} 
Given a set of quadratic polynomials $g_i, i = 1, \ldots, m,$ it is strongly NP-hard to test whether the set
	$$\{x \in \mathbb{R}^n | ~g_i(x) = 0, i = 1, \ldots, m\}$$
	is stably compact.
%
%Testing stable compactness of a basic semialgebraic set is strongly NP-hard when the degrees of the defining polynomials are equal to 2.
\end{cor}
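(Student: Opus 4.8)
The plan is to reduce from ONE-IN-THREE 3SAT, in the spirit of Corollary~\ref{Thm: Boundedness}, but using a \emph{homogeneous} encoding so that, in the unsatisfiable case, boundedness of the feasible set is robust rather than incidental. Given an instance $\phi$ with $n$ variables and $k$ clauses, I introduce one extra variable $x_0$ and, with $\phi_{it}$ as in Theorem~\ref{Thm: AOS NP-hard o4c0}, consider the set
\[
S_\phi = \big\{(x_0,x)\in\mathbb{R}^{n+1} :\ x_0^2-x_j^2=0,\ j=1,\dots,n,\ (\phi_{i1}+\phi_{i2}+\phi_{i3}+x_0)^2=0,\ i=1,\dots,k\big\}.
\]
All the defining polynomials are homogeneous of degree two, and the reduction is clearly polynomial with coefficients bounded by an absolute constant, so it suffices to show that $S_\phi$ is stably compact if and only if $\phi$ is not satisfiable. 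The role of $x_0$ is that a satisfying assignment now produces an entire line inside $S_\phi$ rather than an isolated point, which is exactly what defeats (stable) compactness.

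Indeed, if $\hat x\in\{-1,1\}^n$ is a satisfying assignment, then for every $\alpha\in\mathbb{R}$ one checks that $\alpha^2-(\alpha\hat x_j)^2=0$ and that $\phi_{i1}+\phi_{i2}+\phi_{i3}+x_0$ evaluated at $(\alpha,\alpha\hat x)$ equals $\alpha(\phi_{i1}+\phi_{i2}+\phi_{i3}+1)|_{\hat x}=0$, so the line $\{(\alpha,\alpha\hat x):\alpha\in\mathbb{R}\}$ lies in $S_\phi$; hence $S_\phi$ is unbounded, not compact, and a fortiori not stably compact. Conversely, suppose $\phi$ is unsatisfiable. I first claim $S_\phi=\{0\}$: if $(x_0,x)\in S_\phi$ and $x_0=0$, then each $x_j^2=0$ and so $(x_0,x)=0$; if $x_0\neq 0$, then each $x_j/x_0\in\{-1,1\}$, and dividing $\phi_{i1}+\phi_{i2}+\phi_{i3}+x_0=0$ by $x_0$ shows that the assignment $\hat x_j:=x_j/x_0$ makes every clause have exactly one true literal (by the same bookkeeping as in the proof of Theorem~\ref{Thm: AOS NP-hard o4c0}), contradicting unsatisfiability.

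It remains to show that $S_\phi=\{0\}$ is in fact stably compact. Here I would invoke Marshall's characterization recalled above: writing each equality as a pair of ``$\ge 0$'' inequalities, every defining polynomial is homogeneous of degree two, so only the degree-two forms (the polynomials themselves) survive, and the associated function is $q(x_0,x)=\max\bigl(\max_j|x_0^2-x_j^2|,\ \max_i(\phi_{i1}+\phi_{i2}+\phi_{i3}+x_0)^2\bigr)$, which is positive at a point precisely when that point does not lie in $S_\phi$. Since $S_\phi=\{0\}$ does not meet the unit sphere, $q$ is positive on the unit sphere and $S_\phi$ is stably compact. (Alternatively one argues directly: along any sequence leaving every bounded set, normalized so as to converge to a unit vector $(u_0,u)$, a degree-$\le 2$ perturbation with coefficients at most $\epsilon$ contributes in the limit only its degree-two part, whose value on the unit sphere is $O(\epsilon)$; so if the perturbed set were unbounded we would get $\max_j|u_0^2-u_j^2|=O(\epsilon)$ and $\max_i(\phi_{i1}+\phi_{i2}+\phi_{i3}+u_0)^2=O(\epsilon)$ at $u$, which is impossible once $\epsilon$ is smaller than the minimum over the unit sphere of the continuous, everywhere-positive function $\max_j|x_0^2-x_j^2|+\max_i(\phi_{i1}+\phi_{i2}+\phi_{i3}+x_0)^2$.)

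I expect the delicate point to be precisely this last step: passing from ``$S_\phi$ is bounded'' to ``$S_\phi$ is \emph{robustly} bounded.'' This is also why a direct reuse of the set in Corollary~\ref{Thm: Boundedness} fails: there, in the unsatisfiable case the coordinate $y$ is pinned to $0$ only through exact cancellations, and perturbing the clause constraints (for instance by adding $\epsilon y^2$) re-introduces unboundedness. Homogenizing via $x_0$ removes this fragility, because for a homogeneous system robustness of boundedness is governed solely by whether the leading forms have a common zero on the unit sphere, which by the argument above happens exactly when $\phi$ is satisfiable. The remaining items — that the constructed polynomials are quadratic, that the reduction size and coefficients are small, and the clause-by-clause verifications — are routine.
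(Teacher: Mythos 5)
Your proposal is correct and uses essentially the same construction as the paper: the set you call $S_\phi$ is exactly the paper's $T_\phi$, and both proofs rest on Marshall's characterization of stable compactness via positivity on the unit sphere, the only difference being that you first identify $S_\phi$ as $\{0\}$ (unsatisfiable) or as containing a line (satisfiable) and then read off the behavior of $q$, whereas the paper argues directly about the sign of $q_\phi$ on the sphere. Your parenthetical perturbation argument and the explanation of why the non-homogeneous set from Corollary~\ref{Thm: Boundedness} cannot be reused are correct observations beyond what the paper records, but the main line of reasoning coincides with the paper's proof.
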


\begin{proof}
	Consider a ONE-IN-THREE 3SAT instance $\phi$ with $n$ variables and $k$ clauses. Let $\phi_{ij}$ be as in the proof of Theorem \ref{Thm: AOS NP-hard o4c0} and consider the set
	$$T_{\phi} = \big\{(x_0,x)\in \mathbb{R}^{n+1}|~(\phi_{i1} + \phi_{i2} + \phi_{i3} + x_0)^2 = 0, i=1,\ldots,k, x_0^2 - x_j^2 = 0, j = 1, \ldots, n\big\}.$$
	We show that the function $$q_{\phi}(x_0,x)=\max_{i=1,\ldots,k, j=1,\ldots,n}\{-(\phi_{i1} + \phi_{i2} + \phi_{i3} + x_0)^2, (\phi_{i1} + \phi_{i2} + \phi_{i3} + x_0)^2, x_0^2 - x_j^2, x_j^2-x_0^2\}$$ is positive on the unit sphere if and only if the instance $\phi$ is not satisfiable. Suppose first that $\phi$ is not satisfiable and assume for the sake of contradiction that there is a point $(x_0,x)$ on the sphere such that $q_{\phi}(x_0,x)=0.$ This implies that $\phi_{i1} + \phi_{i2} + \phi_{i3} + x_0=0, \forall i=1,\ldots,k$ and $x_0^2=x_j^2,\forall j=1,\ldots,n.$ Hence, $x_0 \neq 0$ and $\frac{x}{x_0}$ is a satisfying assignment to $\phi$, which is a contradiction. 
	Suppose now that $\phi$ has a satisfying assignment $\hat{x} \in \{-1,1\}^n.$ Then it is easy to check that 	$$q_{\phi}\left(\frac{(1,\hat{x})}{||(1,\hat{x})^T||}\right)=0.$$
	%
%	%Pick a point $(x_0,x)$ on the sphere. If $x_0=0,$ then there exists $j\in \{1,\ldots,n\}$ such that $x_j^2-x_0^2>0$ and hence $q_{\phi}(x_0,x)>0.$ If $x_0\neq 0$, then $$q_{\phi}(x_0,x)=x_0^2 \cdot q_{\phi}\left(1,\frac{x}{x_0}\right)>0,$$ where the inequality is a consequence of 
%	
%	
%	
%	
%	Let $\hat{x} \in \{-1,1\}^n$ and $\hat{x}_0=1.$ One can check that $$q_{\phi}\left(\frac{(\hat{x}_0,\hat{x})}{||(\hat{x}_0,\hat{x})^T||}\right)>0$$ as there must exist $i \in \{1,\ldots,k\}$ such that $\phi_{i1} + \phi_{i2} + \phi_{i3} + x_0 \neq 0.$
%	
%	Suppose now that $\phi$ is satisfiable. 
%	
%	
%	Observe that the function is only not positive on the unit sphere if all the defining polynomials are concurrently zero, which for a satisfiable instance $\phi$ occurs when $x_0 = 1$ and $x$ corresponds to a satisfying instance (scaled to be on the unit sphere). However, for an instance $\phi$ which is not satisfiable, to have all the defining polynomials to be concurrently zero we need $x \in \{-1,1\}^n$. However, for any such $x$ we have $(\phi_{i1} + \phi_{i2} + \phi_{i3} + 1)^2 > 0$ for at least one $i$, and the same will hold for any corresponding point on the unit sphere.
\end{proof}

\subsubsection{The Archimedean Property}\label{SSSec: Archimedean}

An algebraic notion closely related to compactness is the so-called Archimedean property. This notion has frequently appeared in recent literature at the interface of algebraic geometry and polynomial optimization. The Archimedean property is the assumption needed for the statement of Putinar's Positivstellensatz \cite{putinar1993positive} and convergence of the Lasserre hierarchy \cite{lasserre2001global}. In this subsection, we recall the definition of the Archimedean property and study the complexity of checking it. To our knowledge, the only previous result in this direction is that testing the Archimedean property is decidable \cite[Section 3.3]{wagner2009archimedean}.

We say that a polynomial $p$ is a \emph{sum of squares} (sos) if there exist polynomials $q_1,\ldots,q_r$ such that $p= \sum_{i=1}^r q_i^2$. An sos polynomial is clearly always nonnegative. The \emph{quadratic module} associated with a set of polynomials $g_1,\ldots,g_m$ is the set of polynomials that can be written as $$\sigma_0(x)+\sum_{i=1}^m \sigma_i(x) g_i(x),$$ where $\sigma_0,\ldots,\sigma_m$ are sum of squares polynomials.

%For a set of polynomials $g_1,\ldots,g_m$, its quadratic module is the set of polynomials which can be expressed as, for sos $\sigma_0, \sigma_1, \ldots, \sigma_k$, $\sigma_0 +\sum_{i=1}^m \sigma_i g_i$.

\begin{defn}\label{Defn: Archimedean}
	A quadratic module $Q$ is \emph{Archimedean} if there exists a scalar $R > 0$ such that $R - \sum_{i=1}^n x_i^2 \in Q$.
\end{defn}

Several equivalent characterizations of this property can be found in \cite[Theorem 3.17]{laurent2009sums}. Note that a set $\{x \in \mathbb{R}^n|~g_i(x) \ge 0\}$ for which the quadratic module associated with the polynomials $\{g_i\}$ is Archimedean is compact. However, the converse is not true. For example, for $n>1$, the sets $$\left\{x \in \mathbb{R}^n|~x_1-\frac{1}{2} \ge 0, \ldots, x_n - \frac{1}{2} \ge 0, 1-\prod_{i=1}^n x_i \ge 0\right\}$$ are compact but not Archimedean; see \cite{laurent2009sums}, \cite{prestel2001positive} for a proof of the latter claim. Hence, hardness of testing the Archimedean property does not follow from hardness of testing compactness. 

%%Indeed, consider the set
%$$\left\{x \in \mathbb{R}^n|~x_1-\frac{1}{2} \ge 0, \ldots, x_n - \frac{1}{2} \ge 0, 1-\prod_{i=1}^n x_i \ge 0\right\}$$
%taken from \cite{laurent2009sums} and \cite{prestel2001positive}.

%
% testing the Archimedean property is NP-hard is not directly implied by that testing compactness is NP-hard. A counterexample is the set (from \cite{laurent2009sums} and \cite{prestel2001positive})
%$$\{(x_1,x_2) \in \mathbb{R}^2|x_1-\frac{1}{2} \ge 0, x_2 - \frac{1}{2} \ge 0, 1-x_1x_2 \ge 0\}.$$
%For the convenience of the reader, we give the argument here. Suppose there were sos polynomials $\sigma_1, \ldots, \sigma_4$ such that
%$$R - x_1^2 - x_2^2 = \sigma_1 + (1-x_1)\sigma_2 + (1-x_2)\sigma_3 + (1-x_!x_2)\sigma_4.$$
%One can check that there are no quadratic $\sigma_1$ and scalars $\sigma_2,\sigma_3$, and $\sigma_4$ such that the identity holds. Thus the the right hand side must have terms of degree higher than 2 which cancel. However, the highest degree terms of $\sigma_1$ and $-x_1x_2\sigma_4$ cannot cancel, as the highest degree terms of $x_1x_2\sigma_4$ cannot constitute an sos polynomial. The same is true for the highest degree terms of $x_1 \sigma_2$ and $x_2\sigma_3$, as the term in $x_1 \sigma_2$ with the highest $x_1$ degree will have it be odd, and even for $x_2\sigma_3$.

As mentioned previously, the Archimedean property has received recent attention in the optimization community due to its connection to the Lasserre hierarchy. Indeed, under the assumption that the quadratic module associated with the defining polynomials of the feasible set of (\ref{Defn: pop}) is Archimedean, the Lasserre hierarchy \cite{lasserre2001global} produces a sequence of SDP-based lower bounds that converge to the optimal value of the POP. Moreover, Nie has shown~\cite{nie2014optimality} that under the Archimedean assumption, convergence happens in a finite number of rounds generically. One way to ensure the Archimedean property---assuming that we know that our feasible set is contained in a ball of radius $R$--- is to add the redundant constraint $R^2 \geq \sum_{i=1}^n x_i^2$ to the constraints of (\ref{Defn: pop}). This approach however increases the size of the SDP that needs to be solved at each level of the hierarchy. Moreover, such a scalar $R$ may not be readily available for some applications.

%We remark, though, that if the set is known to be contained in a ball of radius $R$ for some $R$, then the set can be made Archimedean at the cost of one additional redundant constraint. 

Our proof of NP-hardness of testing the Archimedean property will be based on showing that the specific sets that arise from the proof of Corollary \ref{Thm: Boundedness} are compact if and only if their corresponding quadratic modules are Archimedean. Our proof technique will use the Stengle's Positivstellensatz, which we recall next.

\begin{theorem}[Stengle's Positivstellensatz \cite{stengle1974nullstellensatz}]\label{Thm: Stengle}
A basic semialgebraic set $$\mathcal{S} \defeq \{x\in\mathbb{R}^n|~ g_i(x) \ge 0, i= 1,\ldots, m, h_j(x) = 0, j = 1,\ldots, k\}$$ is empty if and only if there exist sos polynomials $\sigma_{c_1, \ldots, c_m}$ and polynomials $t_i$ such that
$$-1 = \sum_{j=1}^k t_jh_j + \sum_{c_1,\ldots,c_m \in \{0,1\}^m} \sigma_{c_1,\ldots,c_m}(x) \Pi_{i=1}^m g_i(x)^{c_i}.$$
\end{theorem}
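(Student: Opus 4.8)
The plan is to obtain Stengle's Positivstellensatz from the Artin--Schreier theory of ordered fields together with Tarski's transfer principle (model completeness of real closed fields). One direction is immediate: if $-1=\sum_{j=1}^k t_j h_j+\sum_{c\in\{0,1\}^m}\sigma_c\prod_{i=1}^m g_i^{c_i}$ with each $\sigma_c$ a sum of squares and the $t_j$ polynomials, then evaluating the right-hand side at any point of $\mathcal S$ yields something $\ge 0$, which is absurd; hence $\mathcal S=\emptyset$. The content is the converse, and I would argue its contrapositive: assuming $-1$ admits no such representation, produce a point of $\mathcal S$.

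First I would let $M\subseteq\mathbb{R}[x_1,\dots,x_n]$ be the set of all polynomials expressible as $\sum_j t_j h_j+\sum_{c\in\{0,1\}^m}\sigma_c\prod_i g_i^{c_i}$ (sums of squares $\sigma_c$, arbitrary polynomials $t_j$). This $M$ is closed under addition and multiplication and contains every square, and by assumption $-1\notin M$; the restriction $c_i\in\{0,1\}$ costs nothing because $g_i^2$ is itself a square and can be absorbed into the coefficients. By Zorn's lemma, enlarge $M$ to a preordering $P$ that is maximal subject to $-1\notin P$. The key algebraic lemma — the ring-theoretic analogue of ``an order can always be refined to make a prescribed element positive'' — asserts that for every $a$, at least one of $P+aP$ and $P-aP$ still omits $-1$; its proof is a short manipulation of two hypothetical representations of $-1$. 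Combined with maximality, this forces $a\in P$ or $-a\in P$ for every $a$. The standard consequence is that $P$ is a \emph{prime cone}: $\mathfrak p\defeq P\cap(-P)$ is a prime ideal, the quotient domain $\mathbb{R}[x]/\mathfrak p$ carries the total order induced by $P$, and this order extends to its field of fractions $K$.

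Next, let $R$ be the real closure of the ordered field $(K,\le)$ and let $\xi=(\xi_1,\dots,\xi_n)\in R^n$ be the image of $(x_1,\dots,x_n)$. Since $g_i\in M\subseteq P$, the residue of $g_i$ is nonnegative in $K$, so $g_i(\xi)\ge 0$ in $R$; since $h_j$ and $-h_j$ both lie in $M\subseteq P$, we have $h_j\in\mathfrak p$ and hence $h_j(\xi)=0$ in $R$. Thus the semialgebraic system defining $\mathcal S$ has a solution over the real closed overfield $R\supseteq\mathbb{R}$. Finally I would transfer this back to $\mathbb{R}$: the inclusion $\mathbb{R}\hookrightarrow R$ is a morphism of real closed fields and is therefore elementary, so the existential sentence $\exists y\,\big(\bigwedge_i g_i(y)\ge 0\wedge\bigwedge_j h_j(y)=0\big)$, whose only parameters are the (real) coefficients of the $g_i$ and $h_j$, holds in $\mathbb{R}$ because it holds in $R$. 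Hence $\mathcal S\neq\emptyset$, as required.

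I expect the third paragraph's input to be the main obstacle: constructing the maximal preordering and verifying that it is a prime cone — i.e., that $\mathfrak p=P\cap(-P)$ is prime and that $P$ genuinely totally orders $\mathbb{R}[x]/\mathfrak p$ — is the heart of the abstract Positivstellensatz and absorbs essentially all of the Artin--Schreier machinery. A minor but necessary point is that $P$ restricts to the usual order on $\mathbb{R}\subseteq\mathbb{R}[x]$, so that $\mathbb{R}\hookrightarrow R$ really is an ordered-field embedding and the transfer step is legitimate; this holds because in $\mathbb{R}$ the sums of squares are exactly the nonnegative numbers, so any preordering containing all squares must induce the standard order there.
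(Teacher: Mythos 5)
The paper cites Stengle's Positivstellensatz as a classical black-box result and gives no proof of its own, so there is no internal argument against which to compare yours; evaluated on its own, your sketch is the standard modern proof (Artin--Schreier abstract Positivstellensatz plus Tarski transfer) and is correct as a blueprint. The preordering $M = I + T$ (ideal generated by the $h_j$ plus the preordering generated by the $g_i$) is indeed closed under sums, products, and squares; the restriction to exponents $c_i \in \{0,1\}$ is harmless because $g_i^2$ is a square; Zorn's lemma gives a maximal $P \supseteq M$ with $-1 \notin P$; the exchange lemma you invoke is proved exactly by the computation $(-1-s_1)(-1-t_1) = -a^2 s_2 t_2$, which forces $-1 = s_1 + t_1 + s_1 t_1 + a^2 s_2 t_2 \in P$ if both $P \pm aP$ contained $-1$; maximality then makes $P$ an ordering with prime support $\mathfrak{p} = P\cap(-P)$, and the induced order on the fraction field of $\mathbb{R}[x]/\mathfrak{p}$ extends to its real closure $R$. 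You correctly check the two points needed to make the endgame work: that $h_j, -h_j \in M$ so the image of $h_j$ is zero in $R$, and that $P$ induces the standard order on $\mathbb{R}$ (since $\mathbb{R}_{\geq 0}$ is the set of squares and $-1 \notin P$), so that $\mathbb{R} \hookrightarrow R$ is an elementary embedding of real closed fields and the existential sentence with real parameters transfers down. You accurately identify the remaining work as verifying that the maximal $P$ really is a prime cone; that is indeed the heart of the matter, and the rest is routine. No gaps.
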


\begin{remark} Note that if only equality constraints are considered, the second term on the right hand side is a single sos polynomial $\sigma_{0,\ldots, 0}$. \jeff{In the next theorem, we only need this special case, which is also known as the Real Nullstellensatz \cite{krivine1964anneaux}.}
\end{remark}

\begin{theorem}\label{Thm: Archimedean NP-hard} 
Given a set of quadratic polynomials $g_1,\ldots,g_m$, it is strongly NP-hard to test whether their quadratic module has the Archimedean property.

\end{theorem}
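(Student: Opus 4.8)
The plan is to recycle the reduction behind Corollary \ref{Thm: Boundedness}. Given a ONE-IN-THREE 3SAT instance $\phi$ with variables $x_1,\dots,x_n$ and $k$ clauses, write $\ell_i(x) := \phi_{i1}+\phi_{i2}+\phi_{i3}+1$ and present the list of quadratic polynomials $1-x_j^2,\ x_j^2-1$ ($j=1,\dots,n$) together with $\ell_i(x)y,\ -\ell_i(x)y$ ($i=1,\dots,k$); call these $g_1,\dots,g_m$. Since they come in $\pm$ pairs, the associated basic semialgebraic set is exactly the set $S$ from Corollary \ref{Thm: Boundedness}, and the quadratic module $Q$ generated by the $g_\ell$ equals $\Sigma^2+I$, where $\Sigma^2$ is the cone of sos polynomials and $I=\langle 1-x_1^2,\dots,1-x_n^2,\ \ell_1 y,\dots,\ell_k y\rangle$ (for membership in $Q$ one absorbs an arbitrary polynomial multiplier $a$ of an equality generator $h$ via $ah=\tfrac{(a+1)^2}{4}h+\tfrac{(a-1)^2}{4}(-h)$). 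The claim to establish is: $Q$ is Archimedean if and only if $\phi$ is not satisfiable.

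One direction is immediate. If $\phi$ is satisfiable then, exactly as in the proof of Corollary \ref{Thm: Boundedness}, $S$ is unbounded (the $y$-coordinate is free over a satisfying assignment). But every element of $Q$ is nonnegative on $S$, so if $R-\sum_{i=1}^n x_i^2-y^2\in Q$ for some $R>0$, then $S$ would lie in the ball of radius $\sqrt R$ — a contradiction. Hence $Q$ is not Archimedean.

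The substance is the converse: if $\phi$ is not satisfiable, $Q$ is Archimedean. Here $S=\{-1,1\}^n\times\{0\}$ is finite, and the goal is to convert this into an algebraic certificate $R-\sum_{i=1}^n x_i^2-y^2\in\Sigma^2+I$. Since $\sum_i(1-x_i^2)=n-\sum_i x_i^2\in I$, it suffices to produce $y^2\in I$, for then $R-\sum_i x_i^2-y^2\equiv R-n\pmod I$ and $R-n\ge 0$ is a constant (hence sos) once $R\ge n$. To get $y^2\in I$ I would first note that $y$ vanishes on the complex variety of $I$: any common zero of $\{1-x_j^2\}$ has $x\in\{-1,1\}^n$, and unsatisfiability of $\phi$ forces some $\ell_i(x)\ne 0$, hence $y=0$; so by Hilbert's Nullstellensatz $y^N\in I$ for some $N$. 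I would then show $I$ is radical — cleanly via the CRT decomposition $\mathbb{R}[x,y]/\langle 1-x_j^2\rangle\cong\prod_{v\in\{-1,1\}^n}\mathbb{R}[y]$, in each factor of which the image of the ideal generated by the $\ell_i y$ equals $\langle y\rangle$, so that $\mathbb{R}[x,y]/I\cong\prod_v\mathbb{R}$ is reduced — and conclude $y\in\sqrt I=I$. (An alternative phrasing, using only the tools already in the paper: $y$ vanishes on $V_{\mathbb R}(I)$, so the Real Nullstellensatz gives $y^{2m}+\sigma\in I$ with $\sigma$ sos and $m\ge 1$, and evaluating on the finite set of real points forces $\sigma\equiv 0$ and sharpens this to $y\in I$.)

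The main obstacle is precisely this last step. Compactness of $S$ alone does not yield the Archimedean property — that is exactly what makes "Archimedean" strictly stronger than "compact," and is why hardness of testing compactness does not already give this theorem — so the argument must exploit that in the unsatisfiable case $S$ is not merely compact but a finite set with radical defining ideal. One must be careful to genuinely obtain $y\in I$ rather than a weaker statement such as $y^N\in I$ or $\sigma_1\cdot(R-\sum x_i^2-y^2)\in Q$ with a nonconstant sos multiplier $\sigma_1$, since such a multiplier cannot in general be removed; the radicality/finiteness argument above is what licenses its removal. Everything else — NP-hardness of the source problem, polynomial length of the reduction, and coefficients of size $O(\log n)$ — is inherited verbatim from Corollary \ref{Thm: Boundedness}.
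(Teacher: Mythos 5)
Your proof is correct, and in the crucial converse direction it takes a genuinely different route from the paper's. Both arguments start from the same observation that, because the generators come in $\pm$ pairs, the quadratic module equals $\Sigma^2 + I$ with $I = \langle 1-x_1^2,\ldots,1-x_n^2,\ \ell_1 y,\ldots,\ell_k y\rangle$, and both use $n - \sum_i x_i^2 = \sum_i(1-x_i^2) \in I$ to reduce the question to exhibiting $-y^2$ in $\Sigma^2 + I$. The paper does this by applying the Real Nullstellensatz to the \emph{$x$-only} emptiness statement $\{x \,:\, 1-x_j^2 = 0,\ \ell_i(x) = 0\} = \emptyset$, obtaining $-1 = \tilde\sigma_0(x) + \sum_i \tilde v_i(x)\ell_i(x) + \sum_j \tilde t_j(x)(1-x_j^2)$, and then multiplying this identity through by $y^2$ so that each term acquires the right form $\tilde v_i y \cdot (\ell_i y)$; this is a short trick that keeps the argument inside the Positivstellensatz machinery used elsewhere in the paper and, in particular, never needs the ideal to be radical. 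You instead compute $\mathbb{R}[x,y]/I$ outright via the CRT isomorphism $\mathbb{R}[x,y]/\langle 1-x_j^2\rangle \cong \prod_{v\in\{-1,1\}^n}\mathbb{R}[y]$, observe that unsatisfiability makes the image of $I$ in each factor equal to $\langle y\rangle$, and conclude that $I$ is radical with $y \in I$. Your route establishes the stronger structural fact $y \in I$ (so $-y^2 \in I$ outright, no sos correction term needed) and isolates the precise algebraic reason Archimedeanity holds in the unsatisfiable case---the variety is a finite set cut out by a radical ideal, not merely a compact one---whereas the paper's $y^2$-multiplication trick is more compact and stays closer to the certificate-producing theme of the rest of the paper. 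One minor flaw worth fixing: your parenthetical ``alternative phrasing'' via the Real Nullstellensatz does not quite work as stated---from $y^{2m}+\sigma\in I$ with $\sigma$ sos, the vanishing of $\sigma$ on the finitely many real zeros of $I$ does \emph{not} force $\sigma\equiv 0$ as a polynomial, and even $y^{2m}\in I$ would not yield $y\in I$ without the radicality you proved via CRT. That aside is not load-bearing, so the main argument stands.
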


\begin{proof}
Consider a ONE-IN-THREE 3SAT instance $\phi$ with $n$ variables and $k$ clauses. Let $\phi_{ij}$ be as in the proof of Theorem \ref{Thm: AOS NP-hard o4c0} and consider the set of quadratic polynomials
$$\big\{(\phi_{i1}+\phi_{i2}+\phi_{i3})y, -(\phi_{i1}+\phi_{i2}+\phi_{i3})y, i=1,\ldots,k; 1-x_j^2, x_j^2-1, j=1,\ldots,n\big\}.$$

%$$S = \{(x,y)\in \mathbb{R}^{n+1}|(\phi_{i1} + \phi_{i2} + \phi_{i3} + 1)y = 0, i=1,\ldots,k, 1 - x_i^2 = 0, i = 1, \ldots, n\}.$$

We show that the quadratic module associated with these polynomials is Archimedean if and only if $\phi$ is not satisfiable. First observe that if $\phi$ is satisfiable, then the quadratic module cannot be Archimedean as the set 
$$S = \big\{(x,y)\in \mathbb{R}^{n+1}|~(\phi_{i1} + \phi_{i2} + \phi_{i3} + 1)y = 0, i=1,\ldots,k, 1 - x_j^2 = 0, j = 1, \ldots, n\big\}$$
is not compact (see the proof of Corollary \ref{Thm: Boundedness}).

Now suppose that the instance $\phi$ is not satisfiable. 
%Then we know $S$ is compact as we require $x$ to be in $\{-1,1\}^n$ and $y=0$. 
We need to show that for some scalar $R > 0$ and some sos polynomials $\sigma_0,\sigma_1,\ldots,\sigma_k,$ $\hat{\sigma}_1,\ldots,\hat{\sigma}_k$,$\tau_1,\ldots,\tau_n$,$\hat{\tau}_1,\ldots,\hat{\tau}_n$, we have
\begin{align*}
R-\sum_{i=1}^n x_i^2-y^2 &=\sigma_0(x,y)+\sum_{i=1}^k \sigma_i(x,y) (\phi_{i1}+\phi_{i2}+\phi_{i3}+1)y\\
&+\sum_{i=1}^k \hat{\sigma}_i(x,y) (-\phi_{i1}-\phi_{i2}-\phi_{i3}-1)y
+\sum_{j=1}^n \tau_j(x,y) (1-x_j^2)+\sum_{j=1}^n\hat{\tau}_j(x,y)(x_j^2-1).
\end{align*}
Since any polynomial can be written as the difference of two sos polynomials (see, e.g., \cite[Lemma 1]{ahmadi2015dc}), this is equivalent to existence of a scalar $R>0$, an sos polynomial $\sigma_0$, and some polynomials $v_1,\ldots,v_k, t_1,\ldots,t_n$ such that 
\begin{align}\label{eq:archimedean}
R-\sum_{i=1}^n x_i^2-y^2 &=\sigma_0(x,y)+\sum_{i=1}^k v_i(x,y) (\phi_{i1}+\phi_{i2}+\phi_{i3}+1)y +\sum_{j=1}^n t_j(x,y) (1-x_j^2).
\end{align}
% $\sigma_0$, and some polynomials\footnote{Note that the equality constraints can be written as two complementary inequality constraints, and in light of the fact that any polynomial can be written as the difference of two sos polynomials (see, e.g., Lemma 1 of \cite{ahmadi2015dc}), the multipliers for the corresponding polynomials can be any polynomial.} $t_1, \ldots, t_k, v_1, \ldots, v_n$, we have the algebraic identity:
%$$R - \sum_{i=1}^n x_i^2 - y^2 = \sigma_0(x,y) + \sum_{i=1}^k t_i(x,y) (\phi_{i1} + \phi_{i2} + \phi_{i3} + 1)y + \sum_{j=1}^n v_j(x,y)(1-x_j^2).$$

First, note that 
\begin{align}\label{eq:first.part.arch}
n-\sum_{i=1}^n x_i^2=\sum_{i=1}^n (1-x_i^2).
\end{align}
Secondly, as $\phi$ is not satisfiable, we know that the set 
$$\{x \in \mathbb{R}^n|~1-x_j^2 = 0, j= 1,\ldots,n, \phi_{i1} + \phi_{i2} + \phi_{i3} + 1 = 0,i = 1, \ldots, k\}$$
is empty. From Stengle's Positivstellensatz, it follows that there exist an sos polynomial $\tilde{\sigma}_0$ and some polynomials $\tilde{v}_1,\ldots, \tilde{v}_k$, $\tilde{t}_1,\ldots,\tilde{t}_n$ such that 
\begin{align*}
-1=\tilde{\sigma}_0(x)+\sum_{i=1}^k \tilde{v}_i(x)(\phi_{i1}+\phi_{i2}+\phi_{i3}+1)+\sum_{j=1}^n \tilde{t}_j(x)(1-x_j^2).
\end{align*}
Multiplying this identity on either side by $y^2$, we obtain:
\begin{align}\label{eq:second.part.arch}
-y^2=y^2\tilde{\sigma}_0(x)+\sum_{i=1}^k \tilde{v}_i(x)y \cdot(\phi_{i1}+\phi_{i2}+\phi_{i3}+1)y+\sum_{j=1}^n \tilde{t}_j(x)y^2(1-x_j^2).
\end{align}
Note that if we sum (\ref{eq:first.part.arch}) and (\ref{eq:second.part.arch}) and take $R=n$, $\sigma_0(x,y)=y^2\tilde{\sigma}_{0}(x)$, $v_i(x,y)=y\tilde{v}_i(x)$ for all $i=1,\ldots,k$, and $t_j(x,y)=y^2 \cdot \tilde{t}_j(x)+1$ for all $j=1,\ldots,n$, we recover (\ref{eq:archimedean}).

\end{proof}

%
%Let us separate this into a sufficient statement. If there exist a scalar $R > 0$, polynomials $p_1, \ldots, p_n, p'_1, \ldots, p'_n, q_1, \ldots, q_k$, and an sos polynomial $\sigma$ such that
%$$R - \sum_{i=1}^n x_i^2 = \sum_{i=1}^n p_i(x) (1-x_i^2)$$
%and
%$$-y^2 = \sigma(x,y) + \sum_{i=1}^k q_i(x,y)(\phi_{i1} + \phi_{i2} + \phi_{i3} + 1)y + \sum_{i=1}^n p'_i(x,y) (1-x_i^2),$$
%then we can simply let $\sigma_0 = \sigma$, $g_i = q_i$, and $h_i = p_i  + p'_i, \forall i > 0$.
%
%For the first identity, we can simply let $p_i = 1, \forall i> 0$ and $R = n$. For the second identity, we will use the Stengle's Positivstellensatz. Since $\phi$ is not satisfiable, we know that the set
%$$\{x \in \mathbb{R}^n|1-x_i^2 = 0, i= 1,\ldots,n, \phi_{i1} + \phi_{i2} + \phi_{i3} + 1 = 0,i = 1, \ldots, k\}$$
%is empty. Stengle's Positivestellensatz then implies for polynomials $q'_1, \ldots, q'_k, p''_1,\ldots,p''_n$ and an sos polynomial $\sigma'$,
%
%$$-1 = \sigma'(x,y) + \sum_{i=1}^k q'_i(x,y) (\phi_{i1} + \phi_{i2} + \phi_{i3} + 1) + \sum_{i=1}^n p''_i(x,y)(1-x_i^2)$$
%$$\Downarrow$$
%$$-y^2 = (\sigma'(x,y) + \sum_{i=1}^k q'_i(x,y) (\phi_{i1} + \phi_{i2} + \phi_{i3} + 1) + \sum_{i=1}^n p''_i(x,y)(1-x_i^2))y^2.$$
%Now let $\sigma = \sigma'y^2, 'q_i = q'_iy$ for $i = 1,\ldots,k$, and $p'_i = p''_iy^2$ for $i = 1,\ldots,n$, and we have the second identity.
%\end{proof}

\section{Algorithms for Testing Attainment of the Optimal Value}\label{Sec: Algorithms}
In this section, we give a hierarchy of sufficient conditions for compactness of a closed basic semialgebraic set, and a hierarchy of sufficient conditions for coercivity of a polynomial. These hierarchies are amenable to semidefinite programming (SDP) as they all involve, in one way or another, a search over the set of sum of squares polynomials. The connection between SDP and sos polynomials is well known: %that are amenable to semidefinite programming. These hierarchies use SDP to search over the set of sum of squares polynomials. 
a polynomial $\sigma$ of degree $2d$ is sos if and only if there exists a symmetric positive semidefinite matrix $Q$ such that $\sigma(x)=z(x)^TQz(x)$ for all $x$, where $z(x)$ here is the standard vector of monomials of degree up to $d$ in the variables $x$ (see, e.g. \cite{parrilo2003semidefinite}).

%it can be written as $\sigma(x) = z(x)^TQz(x)$, where $z(x)$ is the vector of all monomials of degree up to $d$ in the variables $x$ and $Q$ is a positive semidefinite matrix (see, e.g. \cite{parrilo2003semidefinite}).

The hierarchies that we present are such that if the property in question (i.e., compactness or coercivity) is satisfied on an input instance, then some level of the SDP hierarchy will be feasible and provide a certificate that the property is satisfied. The test for compactness is a straightforward application of Stengle's Positivstellensatz, but the test for coercivity requires a new characterization of this property, which we give in Theorem \ref{Thm: Polynomial Radius}.

\subsection{Compactness of the Feasible Set}\label{SSec: Compactness Alg}

Consider a closed basic semialgebraic set $$\mathcal{S} \defeq \{x \in \mathbb{R}^n | ~g_i(x) \ge 0, i = 1, \ldots, m\},$$ where the polynomials $g_i$ have integer coefficients\footnote{If some of the coefficients of the polynomials $g_i$ are rational (but not integer), we can make them integers by clearing denominators without changing the set $\mathcal{S}$.} and are of degree at most $d$. A result of Basu and Roy \cite[Theorem 3]{basu2010bounding} implies that if $\mathcal{S}$ is bounded, then it must be contained in a ball of radius 
\begin{equation}\label{Eq: Basu Bound}
R^* \defeq \sqrt{n}\left((2d+1)(2d)^{n-1}+1\right)2^{(2d+1)(2d)^{n-1}(2nd+2)\left(2\tau + bit((2d+1)(2d)^{n-1})+(n+1)bit(d+1)+bit(m)\right)},
\end{equation}
where $\tau$ is the largest bitsize of any coefficient of any $g_i$, and bit($\eta$) denotes the bitsize of $\eta$.

%the number of bits in the binary representation of $\eta$.

%By a result of Basu and Roy \cite[Theorem 3]{basu2010bounding}, if a basic semialgebraic set $$\mathcal{S} \defeq \{x \in \mathbb{R}^n | ~g_i(x) \ge 0, i = 1, \ldots, m\}$$ is bounded, it can be contained in a ball of radius $R^*$, where
%
%\begin{equation}\label{Eq: Basu Bound}
%R^* \defeq \sqrt{n}\left((2d+1)(2d)^{n-1}+1\right)2^{(2d+1)(2d)^{n-1}(2nd+2)\left(2\tau + bit((2d+1)(2d)^{n-1})+(n+1)bit(d+1)+bit(m)\right)}.
%\end{equation}

%Here $d$ is the largest degree of any $g_i$, $\tau$ is the largest bitsize of any coefficient of any $g_i$, and bit($x$) denotes the number of bits in the binary representation of $x$.

With this result in mind, the following proposition is an immediate consequence of Stengle's Positivstellensatz (c.f. Theorem \ref{Thm: Stengle}) after noting that the set $\mathcal{S}$ is bounded if and only if the set $$\{x\in \mathbb{R}^n|~g_i(x) \ge 0, i = 1, \ldots, m, \sum_{i=1}^n x_i^2 \ge R^* + 1\}$$ is empty.
\begin{prop}\label{Prop: Compactness Stengle}
	Consider a closed basic semialgebraic set $\mathcal{S} \defeq \{x \in \mathbb{R}^n |~ g_i(x) \ge 0, i = 1, \ldots, m\}$, where the polynomials $g_i$ have integer coefficients and are of degree at most $d$. Let $R^*$ be as in (\ref{Eq: Basu Bound}) and let $g_0(x) = \sum_{i=1}^n x_i^2 - R^* -1$. Then the set $\mathcal{S}$ is compact if and only if there exist sos polynomials $\sigma_{h_0,\ldots, h_m}$ such that
	$$-1 = \sum_{h_0,...,h_m \in \{0,1\}^{m+1}} \sigma_{h_0,...,h_m}(x) \Pi_{i=0}^m g_i(x)^{h_i}.$$
\end{prop}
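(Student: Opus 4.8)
The statement is essentially a restatement of Stengle's Positivstellensatz combined with the Basu--Roy bound, so the proof is short. The idea is to reduce compactness of $\mathcal{S}$ to emptiness of an auxiliary basic semialgebraic set, and then invoke Theorem~\ref{Thm: Stengle} on that set.

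First I would observe the equivalence stated just before the proposition: since $\mathcal{S}$ is closed by hypothesis, $\mathcal{S}$ is compact if and only if it is bounded, and by the Basu--Roy result (\cite[Theorem 3]{basu2010bounding}) a bounded $\mathcal{S}$ defined by integer-coefficient polynomials of degree at most $d$ must lie in the ball of radius $R^*$ given by (\ref{Eq: Basu Bound}). Hence $\mathcal{S}$ is bounded if and only if $\mathcal{S}$ contains no point with $\sum_{i=1}^n x_i^2 \ge R^* + 1$, i.e. if and only if the set
$$\mathcal{S}' \defeq \{x \in \mathbb{R}^n \mid g_i(x) \ge 0,\ i = 1, \ldots, m,\ g_0(x) \ge 0\}$$
is empty, where $g_0(x) = \sum_{i=1}^n x_i^2 - R^* - 1$. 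The one subtlety worth spelling out is the direction ``bounded $\Rightarrow$ contained in ball of radius $R^*$'': this is exactly what Basu--Roy gives (any bounded such set lies in a ball whose radius is bounded by an explicit function of $n, d, m$ and the coefficient bitsizes), so $\mathcal{S}' = \emptyset$. The reverse direction is trivial: if $\mathcal{S}'$ is empty then $\mathcal{S} \subseteq \{x \mid \sum x_i^2 < R^*+1\}$, so $\mathcal{S}$ is bounded, hence compact.

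Next I would apply Stengle's Positivstellensatz (Theorem~\ref{Thm: Stengle}) to $\mathcal{S}'$, which is a basic semialgebraic set defined by the $m+1$ inequality constraints $g_0 \ge 0, g_1 \ge 0, \ldots, g_m \ge 0$ and no equality constraints. The theorem says $\mathcal{S}'$ is empty if and only if there exist sos polynomials $\sigma_{h_0,\ldots,h_m}$, indexed by $(h_0,\ldots,h_m) \in \{0,1\}^{m+1}$, such that
$$-1 = \sum_{h_0,\ldots,h_m \in \{0,1\}^{m+1}} \sigma_{h_0,\ldots,h_m}(x)\, \Pi_{i=0}^m g_i(x)^{h_i}.$$
Chaining the two equivalences — $\mathcal{S}$ compact $\iff$ $\mathcal{S}'$ empty $\iff$ the displayed certificate exists — completes the proof.

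**Main obstacle.** There is no real obstacle; the proposition is a packaging of known results. The only thing that requires care is making sure the hypotheses of Basu--Roy (integer coefficients, the role of $\tau$, $m$, and the degree bound $d$) match the way $R^*$ is written in (\ref{Eq: Basu Bound}), and noting via the footnote that rational coefficients can be cleared to integers without changing $\mathcal{S}$, so that the integrality assumption is without loss of generality. I would also remark, if desired, that this yields an SDP hierarchy: fixing a degree bound on the $\sigma_{h_0,\ldots,h_m}$ turns the search for such a certificate into a single semidefinite feasibility problem, and letting the degree bound grow gives a hierarchy that is guaranteed to succeed at some finite level precisely when $\mathcal{S}$ is compact.
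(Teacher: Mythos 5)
Your proof is correct and follows essentially the same route as the paper, which states this proposition as an immediate consequence of Stengle's Positivstellensatz after reducing compactness (equivalently, boundedness, since $\mathcal{S}$ is closed) to emptiness of the augmented set $\mathcal{S}' = \{x : g_0(x) \ge 0, \ldots, g_m(x) \ge 0\}$ via the Basu--Roy radius bound. Your write-up simply fleshes out the same two-step chain of equivalences that the paper compresses into a single sentence preceding the proposition.
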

%Note that the Archimedean property (which also uses sos polynomials) only gives a sufficient condition for compactness, whereas by using Stengle's Positivstellensatz we are making the condition also necessary. 
This proposition naturally yields the following semidefinite programming-based hierarchy indexed by a nonnegative integer $r$:

\begin{equation}\label{Eq: Compactness SDP}
\begin{aligned}
& \underset{\sigma_{h_0, \ldots, h_m}}{\min}
& & 0 \\
& \text{subject to}
&& -1 = \sum_{h_0,\ldots,h_m \in \{0,1\}^{m+1}} \sigma_{h_0,\ldots,h_m}(x) \Pi_{i=0}^m g_i(x)^{h_i},\\
&&&\sigma_{h_0, \ldots, h_m} \text{ is sos and has degree }\leq 2r.\\
\end{aligned}
\end{equation}

Note that for a fixed level $r$, one is solving a semidefinite program whose size is polynomial in the description of $\mathcal{S}$. If for some $r$ the SDP is feasible, then we have an algebraic certificate of compactness of the set $\mathcal{S}.$ Conversely, as Proposition \ref{Prop: Compactness Stengle} implies, if $\mathcal{S}$ is compact, then the above SDP will be feasible for some level $r^*$. One can upper bound $r^*$ by a function of $n, m,$ and $d$ only using the main theorem of \cite{lombardi2014elementary}. This bound is however very large and mainly of theoretical interest.
%
%Note that each fixed level of this hierarchy has size polynomial in the description of $\mathcal{S}$. From Stengle's Positivstellensatz, if some level of the SDP is feasible, then we have an algebraic certificate of the compactness of $\mathcal{S}$. Furthermore if $\mathcal{S}$ is compact, then the SDP is guaranteed to be feasible for some level $R$. The level can be bounded by a function of $n,m$, and $d$ only using the main theorem of \cite{lombardi2014elementary}. However, this bound is very large and of theoretical interest only.

\subsection{Coercivity of the Objective Function}\label{SSec: Coercivity Alg}

It is well known that the infimum of a continuous coercive function over a closed set is attained. This property has been widely studied, even in the case of polynomial functions; see e.g. \cite{bajbar2015coercive,bajbar2017fast,jeyakumar2014polynomial}. 
%Coercivity has been well studied in the literature as a sufficient condition for the existence of an optimal solution (see, e.g., \cite{bajbar2015coercive,bajbar2017fast,jeyakumar2014polynomial}). 
A simple sufficient condition for coercivity of a polynomial $p$ is for its terms of highest order to form a positive definite (homogeneous) polynomial; see, e.g., \cite[Lemma 4.1]{jeyakumar2014polynomial}. One can give a hierarchy of SDPs to check for this condition as is done in \cite[Section 4.2]{jeyakumar2014polynomial}. However, this condition is sufficient but not necessary for coercivity. For example, the polynomial $x_1^4 + x_2^2$ is coercive, but its top homogeneous component is not positive definite. Theorem \ref{Thm: Polynomial Radius} below gives a necessary and sufficient condition for a polynomial to be coercive which lends itself again to an SDP hierarchy. To start, we need the following proposition, whose proof is straightforward and thus omitted.

\begin{prop}\label{Prop: Coercive sublevel sets bounded}
	A function $f:\mathbb{R}^n \rightarrow \mathbb{R}$ is coercive if and only if the sets $$\mathcal{S}_\gamma \defeq \{x \in \mathbb{R}^n|~\jeff{f}(x) \le \gamma\}$$ are bounded for all $\gamma \in \mathbb{R}$.
\end{prop}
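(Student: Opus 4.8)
The plan is to prove both implications by a routine contradiction argument using sequences, since coercivity is itself a statement about sequences going to infinity in norm.

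First I would prove the forward direction: assume $f$ is coercive, and suppose for the sake of contradiction that $\mathcal{S}_\gamma$ is unbounded for some $\gamma \in \mathbb{R}$. Then there exists a sequence $\{x_k\} \subseteq \mathcal{S}_\gamma$ with $\|x_k\| \to \infty$. Coercivity gives $f(x_k) \to \infty$, but by definition of $\mathcal{S}_\gamma$ we have $f(x_k) \le \gamma$ for all $k$, a contradiction. Hence every sublevel set is bounded.

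Next I would prove the reverse direction: assume all sublevel sets $\mathcal{S}_\gamma$ are bounded, and suppose for contradiction that $f$ is not coercive. Then there is a sequence $\{x_k\}$ with $\|x_k\| \to \infty$ for which $f(x_k)$ does not tend to $+\infty$; that is, there exist a constant $M \in \mathbb{R}$ and a subsequence $\{x_{k_j}\}$ with $f(x_{k_j}) \le M$ for all $j$. Then $\{x_{k_j}\} \subseteq \mathcal{S}_M$, which is bounded by hypothesis, contradicting $\|x_{k_j}\| \to \infty$. Therefore $f$ is coercive.

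There is no real obstacle here — the only mild subtlety is correctly negating the definition of coercivity in the reverse direction (``$f(x_k)\to\infty$ fails'' yields a bounded subsequence of values, not a single bounded value), but this is handled by passing to a subsequence as above. This is exactly why the paper marks the proof as straightforward and omits it.
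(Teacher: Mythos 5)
Your proof is correct, and since the paper explicitly omits the proof as straightforward, your argument is precisely the standard one that a reader would supply. Both directions are handled cleanly, and you correctly note the one point that requires care — negating $f(x_k)\to\infty$ yields only a subsequence along which the values stay bounded, which is exactly what is needed to land inside a single sublevel set $\mathcal{S}_M$.
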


A polynomial $p$ is said to be \emph{$s$-coercive} if $p(x)/\|x\|^s$ is coercive. The \emph{order of coercivity} of $p$ is the supremum over $s \ge 0$ for which $p$ is $s$-coercive. It is known that the order of coercivity of a coercive polynomial is always positive \cite{bajbar2017fast,gorin1961asymptotic}.

\begin{theorem}\label{Thm: Polynomial Radius}
	A polynomial $p$ is coercive if and only if there exist an even integer $c > 0$ and a scalar $k \ge 0$ such that for all $\gamma \in \mathbb{R}$, the $\gamma$-sublevel set of $p$ is contained within a ball of radius $\gamma^c+k$.
\end{theorem}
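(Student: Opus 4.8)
The plan is to prove the two directions separately, with the ``only if'' direction being the substantive one. For the ``if'' direction, suppose such an even integer $c>0$ and scalar $k\ge 0$ exist. Then for every $\gamma\in\mathbb{R}$, the sublevel set $\mathcal{S}_\gamma=\{x\in\mathbb{R}^n\,|\,p(x)\le\gamma\}$ is contained in a ball of radius $\gamma^c+k$, hence is bounded. By Proposition \ref{Prop: Coercive sublevel sets bounded}, $p$ is coercive. (One minor subtlety: for very negative $\gamma$ the set $\mathcal{S}_\gamma$ may be empty, in which case the containment is vacuous; and $\gamma^c+k$ is a genuine nonnegative radius since $c$ is even. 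This causes no problem.)

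For the ``only if'' direction, suppose $p$ is coercive. By the result cited from \cite{bajbar2017fast,gorin1961asymptotic}, the order of coercivity of $p$ is some number $s>0$, so $p$ is $s'$-coercive for some rational $s'\in(0,s]$; equivalently there exist constants $a>0$ and $b\in\mathbb{R}$ such that $p(x)\ge a\|x\|^{s'}+b$ for all $x$ (this kind of lower bound is exactly what positive order of coercivity buys, possibly after shrinking the exponent slightly and absorbing a compact-set correction into $b$). Rearranging, if $p(x)\le\gamma$ then $a\|x\|^{s'}\le\gamma-b$, so $\|x\|\le\big((\gamma-b)/a\big)^{1/s'}$ whenever the right-hand side is defined (and $\mathcal{S}_\gamma$ is empty otherwise). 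The remaining task is purely to dominate the function $\gamma\mapsto\big((\gamma-b)/a\big)^{1/s'}$ by a function of the form $\gamma\mapsto\gamma^c+k$ for a suitable \emph{even} integer $c$ and constant $k\ge 0$, uniformly over all $\gamma\in\mathbb{R}$. Since $1/s'$ may be any positive real, choose an even integer $c\ge 1/s'$; then for $\gamma$ large one has $\big((\gamma-b)/a\big)^{1/s'}\le\gamma^c$, and on the bounded range of $\gamma$ where this fails (but $\mathcal{S}_\gamma\neq\emptyset$) one simply picks $k$ large enough to absorb the finitely-parametrized discrepancy, using continuity and the fact that $\|x\|$ on $\mathcal{S}_\gamma$ is bounded for each fixed $\gamma$.

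The main obstacle I anticipate is justifying cleanly the passage from ``order of coercivity positive'' to a global polynomial-type lower bound $p(x)\ge a\|x\|^{s'}+b$. Positive order of coercivity says $p(x)/\|x\|^{s'}\to\infty$ as $\|x\|\to\infty$, which gives $p(x)\ge \|x\|^{s'}$ outside some ball $\|x\|\le M$; inside that ball $p$ attains a finite minimum (it is continuous on a compact set), so setting $b$ to that minimum minus the relevant correction yields the bound on all of $\mathbb{R}^n$. This is where I would be most careful, since the sharp constants in \cite{bajbar2017fast} and \cite{gorin1961asymptotic} are stated asymptotically; but only the existence of \emph{some} positive exponent and \emph{some} constants is needed, so the argument should go through without invoking the fine structure of those results. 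Once the lower bound is in hand, the reduction of the radius estimate to the elementary inequality $((\gamma-b)/a)^{1/s'}\le\gamma^c+k$ is routine, with the one bookkeeping point being to ensure $c$ is chosen even (which is harmless since we may always enlarge $c$) and $k\ge 0$.
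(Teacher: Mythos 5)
Your argument is correct and rests on the same key ingredient the paper uses (the quantitative consequence of positive order of coercivity cited from \cite{bajbar2017fast}: $\|x\|\ge M\Rightarrow p(x)\ge\|x\|^q$), but you organize the rest differently. You first upgrade this to a \emph{global} lower bound $p(x)\ge\|x\|^{s'}+b$ by absorbing the behavior on the ball $\|x\|\le M$ into the additive constant $b$, and then read off a radius bound by rearranging. The paper instead works directly with the radius function $R_p(\gamma)=\max_{p(x)\le\gamma}\|x\|$: it observes $R_p$ is nondecreasing, introduces the threshold $m=\inf\{\gamma:R_p(\gamma)\ge M\}$, shows $m\ge M^q$, and establishes $R_p(\gamma)\le\gamma^{1/q}$ for $\gamma>m$ by a two-case contradiction argument, finally setting $k=R_p(m)$. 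Your global-bound route avoids that case analysis and is arguably cleaner; the paper's route keeps the sharper intermediate statement $R_p(\gamma)\le\gamma^{1/q}$ explicit.

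One precision point worth tightening: you choose an even integer $c\ge 1/s'$ and claim $\big((\gamma-b)/a\big)^{1/s'}\le\gamma^c$ for large $\gamma$. When $c>1/s'$ strictly, the left side is $O(\gamma^{1/s'})$ and $\gamma^c$ dominates, so the constant $k$ only has to cover a compact $\gamma$-range, as you say. But if $1/s'$ itself happens to be an even integer and you take $c=1/s'$, the inequality can fail for \emph{all} large $\gamma$ when $b<0$: then $(\gamma-b)^{1/s'}-\gamma^{1/s'}\to\infty$ (for $1/s'>1$), so no finite $k$ works. The fix is exactly what the paper does — take $c$ to be the smallest even integer \emph{strictly} greater than $1/s'$ — and you already gesture at this by noting $c$ may be enlarged, so this is a wording issue rather than a structural one.
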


\begin{proof}
	
	The ``if'' direction follows immediately from the fact that each $\gamma$-sublevel set is bounded. For the converse, suppose that $p$ is coercive and denote its order of coercivity by $q > 0$. Then, from Observation 2 of \cite{bajbar2017fast}, we get that there exists a scalar $M \ge 0$ such that 
	\begin{equation}\label{Eq: M q-coercive}
	\|x\| \ge M \Rightarrow p(x) \ge \|x\|^q,
	\end{equation} or equivalently $\|x\| \le p(x)^\frac{1}{q}$. Now consider the function $R_p: \mathbb{R} \to \mathbb{R}$ which is defined as $$R_p(\gamma) \defeq \underset{p(x) \le \gamma}{\max} \|x\|,$$ i.e. the radius of the $\gamma$-sublevel set of $p$. We note two relevant properties of this function.
	
	\begin{itemize}
		\item The function $R_p(\gamma)$ is nondecreasing. This is because the $\gamma$-sublevel set of $p$ is a subset of the $(\gamma+\epsilon)$-sublevel set of $p$ for any $\epsilon > 0$.
		\item Let $m = \inf \{\gamma| ~R_p(\gamma) \ge M\}$. We claim that \begin{equation}\label{Eq: m ge Mq}
		m \ge M^q.
		\end{equation}
		Suppose for the sake of contradiction that we had $m < M^q$. By the definition of $m$, there exists $\bar{\gamma} \in (m,M^q)$ such that $R_p(\bar{\gamma}) \ge M$. This means that there exists $\bar{x} \in \mathbb{R}^n$ such that $p(\bar{x}) \le \bar{\gamma} < M^q$ and $\|\bar{x}\| \ge M$. From (\ref{Eq: M q-coercive}) we then have $p(\bar{x}) \ge \|\bar{x}\|^q \ge M^q$, which is a contradiction.
	\end{itemize}
	
	We now claim that $R_p(\gamma) \le \gamma^\frac{1}{q}$ for all $\gamma > m$. Suppose for the sake of contradiction that there exists $\gamma_0 > m$ such that $R_p(\gamma_0) > \gamma_0^\frac{1}{q}$. This means that there exists $x_0 \in \mathbb{R}^n$ such that $\|x_0\| > \gamma_0^\frac{1}{q}$ but $p(x_0) \le \gamma_0$.
	
	Consider first the case where $p(x_0) \ge m$. Since $\gamma_0 > m$, we have
	$$\|x_0\| > \gamma_0^{1/q} > m^{1/q} \ge M,$$
	where the last inequality follows from (\ref{Eq: m ge Mq}). It follows from (\ref{Eq: M q-coercive}) that $p(x_0) \ge \|x_0\|^q > \gamma_0$ which is a contradiction.
	
	Now consider the case where $p(x_0) < m$. By definition of $m$, we have $R_p(p(x_0)) < M$, and so $\|x_0\| < M$. Furthermore, since $$\gamma_0 > m \overset{(\ref{Eq: m ge Mq})}{\ge} M^q,$$ we have $M < \gamma_0^{1/q}$, which gives $\|x_0\| < M < \gamma_0^{1/q}$. This contradicts our previous assumption that $\|x_0\| > \gamma_0^{1/q}$.
	
	If we let $c$ be the smallest even integer greater than $1/q$, we have shown that $$R_p(\gamma) \le \gamma^{1/q} \le \gamma^c$$ on the set $\gamma > m$. Finally, if we let $k = R_p(m)$, by monotonicity of $R_p$, we get that $R_p(\gamma) \le \gamma^c + k$, for all $\gamma$.
	
\end{proof}

\begin{remark}\label{Rem: Polynomial R2}
	One can easily show now that for any coercive polynomial $p$, there exist an integer $c' > 0$ and a scalar $k' \ge 0$ (possibly differing from the scalars $c$ and $k$ given in the proof of Theorem \ref{Thm: Polynomial Radius}) such that $$R_p^2(\gamma) < \gamma^{2c'} + k'.$$ For the following hierarchy it will be easier to work with this form.
\end{remark}

In view of the above remark, observe that coercivity of a polynomial $p$ is equivalent to existence of an integer $c' > 0$ and a scalar $k' \ge 0$ such that the set \begin{equation}\label{Eq: Coercivity Set}\left\{(\gamma, x)\in \mathbb{R}^{n+1}|~p(x) \le \gamma, \sum_{i=1}^n x_i^2 \ge \gamma^{2c'} + k'\right\}\end{equation} is empty. This formulation naturally leads to the following SDP hierarchy indexed by a positive integer $r$.

\begin{prop}\label{prop:coercivity.SDP.hierarchy}
A polynomial $p$ of degree $d$ is coercive if and only if for some integer $r \geq 1$, the following SDP is feasible:

\begin{equation}\label{Eq: Coercivity SDP}
\begin{aligned}
& \underset{\sigma_0, \ldots, \sigma_3}{\min}
& & 0 \\
& \emph{subject to}
& -1 =& \sigma_0(x,\gamma) + \sigma_1(x,\gamma)(\gamma - p(x)) + \sigma_2(x,\gamma)\left(\sum_{i=1}^n x_i^2 - \gamma^{2r} - 2^r\right) \\
&&&+ \sigma_3(x,\gamma)(\gamma - p(x))\left(\sum_{i=1}^n x_i^2 - \gamma^{2r} - 2^r\right),\\
&&& \sigma_0 \emph{ is sos and of degree } \leq 4r,\\
&&& \sigma_1 \emph{ is sos and of degree } \leq \max\{4r-d,0\},\\
&&& \sigma_2 \emph{ is sos and of degree } \leq 2r,\\
&&& \sigma_3 \emph{ is sos and of degree } \leq \max\{2r-d,0\}.\\
\end{aligned}
\end{equation}
\end{prop}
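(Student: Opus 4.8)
The plan is to read the polynomial identity in (\ref{Eq: Coercivity SDP}) as a Stengle-type certificate that the set in (\ref{Eq: Coercivity Set}) is empty for $c'=r$, $k'=2^r$, and then, when $p$ is coercive, to manufacture such a certificate from Remark \ref{Rem: Polynomial R2}.

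For the ``if'' direction, suppose $(\sigma_0,\ldots,\sigma_3)$ is feasible for (\ref{Eq: Coercivity SDP}) at some level $r$. Were there a point $(\gamma,x)$ with $\gamma-p(x)\ge 0$ and $\sum_{i=1}^n x_i^2-\gamma^{2r}-2^r\ge 0$, evaluating the displayed identity there would make the right-hand side a sum of products of nonnegative quantities, contradicting that the left-hand side is $-1$. Hence for every $\gamma$ the sublevel set $\{x: p(x)\le\gamma\}$ lies in the ball of radius $\sqrt{\gamma^{2r}+2^r}$ and is bounded, so $p$ is coercive by Proposition \ref{Prop: Coercive sublevel sets bounded}.

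For the ``only if'' direction, assume $p$ is coercive. By Remark \ref{Rem: Polynomial R2} there are an integer $c>0$ and a scalar $k\ge 0$ with $R_p^2(\gamma)<\gamma^{2c}+k$ for all $\gamma$, so the basic semialgebraic set $E:=\{(\gamma,x)\in\mathbb{R}^{n+1}: \gamma-p(x)\ge 0,\ \sum_{i=1}^n x_i^2-\gamma^{2c}-k\ge 0\}$ is empty. Applying Stengle's Positivstellensatz (Theorem \ref{Thm: Stengle}, the case with only inequality constraints) to $E$ yields sos polynomials $\tilde\sigma_0,\ldots,\tilde\sigma_3$, of degree at most $2M$, with $-1=\tilde\sigma_0+\tilde\sigma_1(\gamma-p)+\big(\tilde\sigma_2+\tilde\sigma_3(\gamma-p)\big)\big(\sum_{i=1}^n x_i^2-\gamma^{2c}-k\big)$. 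This is the identity in (\ref{Eq: Coercivity SDP}) up to two defects: the radius term is $\gamma^{2c}+k$ rather than $\gamma^{2r}+2^r$, and the $\tilde\sigma_i$ need not respect the degree caps. Both are fixed by taking $r$ large.

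The one step that is not mere bookkeeping is this transfer. For $r>c$ the \emph{univariate} polynomial $\delta_r(\gamma):=\gamma^{2r}+2^r-\gamma^{2c}-k$ is nonnegative on $\mathbb{R}$ whenever $2^r\ge k+1$ — its only interior critical points satisfy $\gamma^{2(r-c)}=c/r<1$, at which $\gamma^{2r}-\gamma^{2c}=\gamma^{2c}(c/r-1)>-1$, and it tends to $+\infty$ at infinity — and, being a nonnegative univariate polynomial, it is a sum of squares. Substituting $\sum_{i=1}^n x_i^2-\gamma^{2c}-k=\big(\sum_{i=1}^n x_i^2-\gamma^{2r}-2^r\big)+\delta_r(\gamma)$ into the identity above produces
\[
-1=(\tilde\sigma_0+\tilde\sigma_2\delta_r)+(\tilde\sigma_1+\tilde\sigma_3\delta_r)(\gamma-p)+\tilde\sigma_2\Big(\sum_{i=1}^n x_i^2-\gamma^{2r}-2^r\Big)+\tilde\sigma_3(\gamma-p)\Big(\sum_{i=1}^n x_i^2-\gamma^{2r}-2^r\Big),
\]
exactly the form required by (\ref{Eq: Coercivity SDP}) with $\sigma_0=\tilde\sigma_0+\tilde\sigma_2\delta_r$, $\sigma_1=\tilde\sigma_1+\tilde\sigma_3\delta_r$, $\sigma_2=\tilde\sigma_2$, $\sigma_3=\tilde\sigma_3$, all sos. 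Since $\deg\delta_r=2r$, these have degrees at most $2M+2r$, $2M+2r$, $2M$, $2M$; choosing $r\ge\max\{c+1,\ \lceil\log_2(k+1)\rceil,\ M+\lceil d/2\rceil\}$ brings them under the caps $4r$, $4r-d$, $2r$, $2r-d$, so the quadruple is feasible at level $r$. I expect this transfer to be the crux: the certificate supplied by Remark \ref{Rem: Polynomial R2} lives over a set whose radius polynomial $\gamma^{2c}+k$ does not match the $\gamma^{2r}+2^r$ hard-wired into the hierarchy, and the single index $r$ controls both that radius polynomial and the degree budget at once; what rescues the argument is that the difference $\delta_r$ of the two radius polynomials is a nonnegative, hence sos, univariate polynomial for $r$ large.
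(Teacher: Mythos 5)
Your proposal is correct and follows essentially the same route as the paper: read the certificate as a Stengle-type proof that the set in (\ref{Eq: Coercivity Set}) is empty for the ``if'' direction, and for ``only if'' obtain a Stengle certificate from Remark \ref{Rem: Polynomial R2}, then absorb the mismatch between the two radius polynomials $\gamma^{2c}+k$ and $\gamma^{2r}+2^r$ into the sos multipliers by exploiting that the univariate difference $\delta_r(\gamma)=\gamma^{2r}+2^r-\gamma^{2c}-k$ is nonnegative (hence sos) for $r$ large, and then pick $r$ large enough to also satisfy the degree caps. The only cosmetic difference is how nonnegativity of $\delta_r$ is argued: you locate the interior critical points via $\gamma^{2(r-c)}=c/r$, whereas the paper uses the more elementary observation that $\gamma^{2r}-\gamma^{2c}\ge 0$ for $|\gamma|\ge 1$ and $\gamma^{2r}-\gamma^{2c}>-1$ for $|\gamma|<1$, so nonnegativity follows once $2^{r}\ge k+1$; both are fine.
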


\begin{proof}
	If the SDP in (\ref{Eq: Coercivity SDP}) is feasible for some $r$, then the set
	\begin{equation}\label{Eq: Coercive Empty Set}
	\left\{(\gamma, x) \in \mathbb{R}^{n+1}|~p(x) \le \gamma, \sum_{i=1}^n x_i^2 \ge \gamma^{2r} + 2^r\right\}
	\end{equation}
	must be empty. Indeed, if this was not the case, a feasible $(\gamma,x)$ pair would make the right hand side of the equality constraint of (\ref{Eq: Coercivity SDP}) nonnegative, while the left hand side is negative. As the set in (\ref{Eq: Coercive Empty Set}) is empty, then for all $\gamma$, the $\gamma$-sublevel set of $p$ is contained within a ball of radius $\sqrt{\gamma^{2r} + 2^r}$ and thus $p$ is coercive.
	
	To show the converse, suppose that $p$ is coercive. Then we know from  Theorem \ref{Thm: Polynomial Radius} and Remark \ref{Rem: Polynomial R2} that there exist an integer $c' > 0$ and a scalar $k' \ge 0$ such that the set in (\ref{Eq: Coercivity Set}) is empty. From Stengle's Positivstellensatz (c.f. Theorem \ref{Thm: Stengle}), there exist an even nonnegative integer $\hat{r}$ and sos polynomials $\sigma_0', \ldots, \sigma_3'$ of degree at most $\hat{r}$ such that
	\begin{equation}\label{eq:Stengle.coercive}
	\begin{aligned}
	-1 =& \sigma_0'(x,\gamma) + \sigma_1'(x,\gamma)(\gamma - p(x)) + \sigma_2'(x,\gamma)\left(\sum_{i=1}^n x_i^2 - \gamma^{2c'} - k'\right) \\
	&+ \sigma_3'(x,\gamma)(\gamma - p(x))\left(\sum_{i=1}^n x_i^2 - \gamma^{2c'} - k'\right).
	\end{aligned}	
	\end{equation}
	Let $r^*=\lceil \max\{c', \log_2(k'+1), \frac{\hat{r}+d}{2}\}\rceil$. We show that the SDP in (\ref{Eq: Coercivity SDP}) is feasible for $r=r^*$ by showing that the polynomials
	$$\sigma_0(x,\gamma) = \sigma_0'(x,\gamma) + \sigma_2'(x,y)(\gamma^{2r^*}-\gamma^{2c'} + 2^{r^*}-k'),$$
	$$\sigma_1(x,\gamma) = \sigma_1'(x,\gamma)+\sigma_3'(x,\gamma)(\gamma^{2r^*}-\gamma^{2c'}+ 2^{r^*}-k'),$$
	$$\sigma_2(x,\gamma) = \sigma_2'(x,\gamma),$$
	$$\sigma_3(x,\gamma) = \sigma_3'(x,\gamma)$$
	are a feasible solution to the problem. First, note that $$4r^*-d\geq 2r^*-d\geq \hat{r}\geq 0,$$ and hence $\sigma_0$ is of degree at most $\hat{r}+2r^* \leq 4r^*$, $\sigma_1$ is of degree at most $\hat{r}+2r^*\leq \max\{4r^*-d,0\}$, $\sigma_2$ is of degree at most $\hat{r}\leq 2r^*$, and $\sigma_3$ is of degree at most $\hat{r}\leq \max\{2r^*-d,0\}$. Furthermore, these polynomials are sums of squares. To see this, note that $\gamma^{2r^*}-\gamma^{2c'}+2^{r^*}-k'$ is nonnegative as $r^*\geq c'$ and $2^{r^*}\geq k'+1$. As any nonnegative univariate polynomial is a sum of squares (see, e.g., \cite{blekherman2012semidefinite}), it follows that $\gamma^{2r^*}-\gamma^{2c'}+ 2^{r^*}-k'$ is a sum of squares. Combining this with the facts that $\sigma_0',\ldots,\sigma_3'$ are sums of squares, and products and sums of sos polynomials are sos again, we get that $\sigma_0,\ldots,\sigma_3$ are sos. Finally, the identity
	\begin{align*}
	-1 =& \left(\sigma_0'(x,\gamma)+ \sigma_2'(x,y)(\gamma^{2r^*}-\gamma^{2c'} + 2^{r^*}-k')\right)\\
	&+ \left(\sigma_1'(x,\gamma)+\sigma_3'(x,\gamma)(\gamma^{2r^*}-\gamma^{2c'}+ 2^{r^*}-k'))(\gamma - p(x)\right) \\
	&+ \sigma_2'(x,\gamma)(\sum_{i=1}^n x_i^2 - \gamma^{2r^*} - 2^{r^*}) + \sigma_3'(x,\gamma)(\gamma - p(x))(\sum_{i=1}^n x_i^2 - \gamma^{2r^*} - 2^{r^*})
	\end{align*}
	holds by a simple rewriting of (\ref{eq:Stengle.coercive}).
	
\end{proof}

As an illustration, we revisit the simple example $p(x) = x_1^4 + x_2^2$, whose top homogeneous component is not positive definite. %\footnote{The code that produced these results is publicly available at \url{aaa.princeton.edu/software}} 
The hierarchy in Proposition \ref{prop:coercivity.SDP.hierarchy} with $r=1$ gives an automated algebraic proof of coercivity of $p$ in terms of the following identity:
\begin{equation}\label{Eq: Coercivity Hierarchy Example}
-1 = \left(\frac{2}{3}(x_1^2 - \frac{1}{2})^2 + \frac{2}{3}(\gamma-\frac{1}{2})^2\right) + \frac{2}{3}(\gamma - x_1^4 - x_2^2) + \frac{2}{3}(x_1^2 + x_2^2 - \gamma^2 - 2).
\end{equation}

Note that this is a certificate that the $\gamma$-sublevel set of $p$ is contained in a ball of radius $\sqrt{\gamma^2+2}$.

\begin{remark}
From a theoretical perspective, our developments so far show that coercivity of multivariate polynomials is a decidable property as it can be checked by solving a finite number of SDP feasibility problems (each of which can be done in finite time \cite{porkolab1997complexity}). Indeed, given a polynomial $p$, one can think of running two programs in parallel. The first one solves the SDPs in Proposition~\ref{prop:coercivity.SDP.hierarchy} for increasing values of $r$. The second uses Proposition~\ref{Prop: Compactness Stengle} and its degree bound to test whether the $\beta$-sublevel set $p$ is compact, starting from $\beta=1$, and doubling $\beta$ in each iteration. On every input polynomial $p$ whose coercivity is in question, either the first program halts with a yes answer or the second program halts with a no answer. We stress that this remark is of theoretical interest only, as the value of our contribution is really in providing proofs of coercivity, not proofs of non-coercivity. Moreover, coercivity can alternatively be decided in finite time by applying the quantifier elimination theory of Tarski and Seidenberg \cite{tarski1951decision,seidenberg1954new} to the characterization in Proposition~\ref{Prop: Coercive sublevel sets bounded}.
%This hierarchy will thus terminate in finite time if the polynomial $p$ in question is coercive, but won't terminate in the negative case. To make this a finite hierarchy for testing coercivity, it can be run in parallel with a hierarchy to test whether $p$ is not coercive. Such a test can be given by testing whether the $\gamma$-sublevel set of $p$ is bounded, for increasing $\gamma$. As with the hierarchy in Proposition \ref{Prop: Compactness Stengle}, such a test can be made finite. However, this complete test is of theoretical interest only, as coercivity can already be checked in finite time, for example via quantifier elimination \cite{tarski1951decision,stengle1974nullstellensatz}.
\end{remark}

\section{Summary and Conclusions} \label{Sec:conclusion}

We studied the complexity of checking existence of optimal solutions in mathematical programs (given as minimization problems) that are feasible and lower bounded. We showed that unless P=NP, this decision problem does not have a polynomial time (or even pseudo-polynomial time) algorithm when the constraints and the objective function are defined by polynomials of low degree. More precisely, this claim holds if the constraints are defined by quadratic polynomials (and the objective has degree as low as one) or if the objective function is a quartic polynomial (even in absence of any constraints). For polynomial optimization problems with linear constraints and objective function of degrees 1,2, or 3, previous results imply that feasibility and lower boundedness always guarantee existence of an optimal solution.

We also showed, again for low-degree polynomial optimization problems, that several well-known sufficient conditions for existence of optimal solutions are NP-hard to check. These were coercivity of the objective function, closedness of the feasible set (even when bounded), boundedness of the feasible set (even when closed), an algebraic certificate of compactness known as the Archimedean property, and a robust analogue of compactness known as stable compactness.

Our negative results should by no means deter researchers from studying algorithms that can efficiently check existence of optimal solutions---or, for that matter, any of the other properties mentioned above such as compactness and coercivity---on special instances. On the contrary, our results shed light on the \jeff{intricacies} that can arise when studying these properties and calibrate the expectations of an algorithm designer. Hopefully, they will even motivate further research in identifying problem structures \jeff{(e.g., based on the Newton polytope of the objective and/or constraints)} for which checking these properties becomes more tractable, or efficient algorithms that can test useful sufficient conditions that imply these properties.
%
%
% efficient algorithms that can test 
%
%or proposing stronger sufficient conditions which are easier to check.
%%
%
% shed light on the delicacies that can arise when studying these properties, calibrate the expectations of an algorithm designer, and hopefully motivate further research in identifying problem structure that makes 
%
%
%by no means suggest that one ****. Rather, they shed light on the delicacies that can arise when studying these properties, calibrate the expectations of an algorithm designer, and hopefully motivate further research in identifying problem structure that makes 

In the latter direction, we argued that sum of squares techniques could be a natural tool for certifying compactness of basic semialgebraic sets via semidefinite programming. By deriving a new characterization of coercive polynomials, we showed that the same statement also applies to the task of certifying coercivity. This final contribution motivates a problem that we leave for our future research. While coercivity (i.e., boundedness of all sublevel sets) of a polynomial objective function guarantees existence of optimal solutions to a feasible POP, the same guarantee can be made from the weaker requirement that \emph{some} sublevel set of the objective be bounded and have a non-empty intersection with the feasible set. It is not difficult to show that this property is also NP-hard to check. However, it would be useful to derive a hierarchy of sufficient conditions for it, where each level can be efficiently tested (perhaps again via SDP), and such that if the property was satisfied, then a level of the hierarchy would hold.\\

{\bf Acknowledgements:} We are grateful to Georgina Hall and two anonymous referees for their careful reading of this manuscript and very constructive feedback. We also thank Etienne de Klerk for offering a simplification of the construction in the proof of Theorem \ref{Thm: AOS NP-hard o4c0}.

%one of the sufficient conditions in the hierarchy would hold.

%satisfiability of the requirement implies satisfiability of one of the sufficient conditions.

%what are some efficient and 
%
%
%What are some efficient and powerful * ways of checking this property, perhaps based again on semidefinite programming?

% (i.e, boundedness) (\ref{Defn: pop})

%We show that for nonlinear optimization problems given by polynomials of low-degree and finite optimal value, testing whether the infimum is attained is NP-hard. While we establish the fact for the problem when the objective is of degree at least 4, it is not as daunting as might be implied. Indeed, many cases have been shown to have a definite yes or no answer, and the number of these cases is sure to grow. Our result also does not exclude the possibility that certain cases have a polynomial-time algorithm to check attainment of the infimum, or alternatively an easily checkable sufficient condition for those cases.

\bibliographystyle{abbrv}
\bibliography{AOS_refs}

\appendix

%%%%%%%%%%%%%%%%%%%%%%%%%%%%%%%%%%%%%%%%%%%%%%%%%%%%%%%%%%%%%%%%%%%%%%%%%%%%%%%

%%%%%%%%%%%%%%%%%%%%%%%%%%%%%%%%%%%%%%%%%%%%%%%%%%%%%%%%%%%%%%%%%%%%%%%%%%%%%%%
%\bibliographystyle{unsrt}

\end{document}